\newtheorem{theorem}{Theorem}[section]
\newtheorem{corollary}[theorem]{Corollary}
\newtheorem{proposition}[theorem]{Proposition}
\newtheorem{lemma}[theorem]{Lemma}
\newtheorem{class}[theorem]{}
\theoremstyle{definition}
\newtheorem{definition}[theorem]{Definition}
\newtheorem{example}[theorem]{Example}
\newtheorem{remark}[theorem]{Remark}
     \newenvironment{prob}{  \noindent  {\bf Problem:}  }{ \vspace{0.3cm}}
\newcommand{\NN}{\mathbb{N}}
\newcommand{\ZZ}{\mathbb{Z}}
\newcommand{\Mf}{{\mathcal{M}\!f}}
\newcommand{\cG}{{\sf{G}}}
\newcommand{\cN}  {{\sf{N}}}
\newcommand{\cT}{{\sf{T}}}
  \newcommand{\Ht}{\textrm{Ht}}
  \newcommand{\supp}{\textrm{Supp}}
\newcommand{\off}{\textrm{off}}
\newcommand{\mult}{\textrm{mult}}
\newcommand{\Gr}{Gr\"obner}
\newcommand{\MFFunctor}[1]{\underline{\mathbf{Mf}}_{#1}}
\newcommand{\MFScheme}[1]{\mathbf{Mf}_{#1}}
\newcommand{\Sets}{\underline{\textnormal{Set}}}
\def\then{\;\Longrightarrow\;}
\def\Bbb#1{{\mathbb #1}}
\def \clss{\mathop{\rm class}\nolimits}
\title{ Term-ordering free involutive bases   }
\author[M.~Ceria]{Michela Ceria}
\address{Michela Ceria\\ Dipartimento di Matematica dell'Universit\`{a} di Torino\\ 
         Via Carlo Alberto 10\\     10123 Torino\\ Italy.}
\email{michela.ceria@unito.it}
\author[T.~Mora]{Teo Mora}
\address{Teo Mora\\ Dipartimento di Matematica dell'Universit\`{a} di
Genova\\
         Via Dodecaneso 35\\     16146 Genova\\ Italy.}
\email{theomora@disi.unige.it}
\author[M.~Roggero]{Margherita Roggero}
\address{Margherita Roggero\\ Dipartimento di Matematica dell'Universit\`{a} di Torino\\ 
         Via Carlo Alberto 10\\     10123 Torino\\ Italy.}
\email{margherita.roggero@unito.it}
\thanks{The third author was supported by the framework of PRIN 2010-11 \emph{Geometria delle variet\`a algebriche}, cofinanced by MIUR}
\subjclass[2010]{14C05, 14Q20, 13P10} 
\begin{document}
\newpage
\fussy
\begin{abstract} 
In this paper, we consider a monomial ideal $J \triangleleft P:=A[x_1,\dots ,x_n]$, over a commutative ring $A$, and we face the problem of the characterization 
for the family $\Mf(J)$ of all homogeneous ideals $I \triangleleft P$ such  that the $A$-module $P/I$ is  free  with basis given by the set of terms in the
 \Gr\  escalier $\cN(J) $ of $J$. This family is in general wider than that of the  ideals having $J$ as initial ideal w.r.t. any term-ordering, hence
 more suited to  a computational approach 
 to the study of Hilbert schemes.\\ 
For this purpose, we exploit and enhance the concepts of multiplicative variables, complete sets and involutive bases introduced by Janet
in \cite{J1,J2,J3} and we generalize the construction of $J$-marked bases and term-ordering free reduction process introduced and deeply studied in 
\cite{BCLR,CR} for the special case of a strongly stable monomial ideal $J$.\\
Here, we introduce and characterize  for every monomial ideal $J$ a particular complete set of generators $\mathcal F(J)$, called stably complete, 
that allows an explicit description of the family $\Mf(J)$. We obtain stronger results if $J$ is quasi stable, proving that  $\mathcal F(J)$
is a  Pommaret basis  and  $\Mf(J)$ has a natural  structure  of affine  scheme. 

 The final section presents a detailed  analysis of the origin and the historical evolution of the main  notions  we refer to.
\end{abstract}
\maketitle
\section{Introduction.}\label{Intro} 
 Let $P:=A[x_1,...,x_n]$  be the polynomial ring in $n$ variables over a commutative ring $A$. 
The problem we address is the following.

\vspace{0.3cm}
\begin{prob}\label{Classif} \textit{Given any   monomial ideal $J\triangleleft P$ find a 
  characterization for  the  family $\Mf(J)$ of all homogeneous ideals $I \triangleleft P$ such  that the $A$-module $P/I$ is  free  with basis given by 
  the set of terms in the
 \Gr\ escalier $\cN(J) $ of $J$. }
\end{prob}

The most relevant examples of  ideals that 
 belong to this family are those
  such that $J$ is their
initial ideal w.r.t. some term-ordering, but
 in general they  form  a  proper
 subset of $\Mf(J)$. Therefore, we must overcome the  \Gr \ framework.  \\ 
  A computational description of  the  whole family $\Mf(J)$ is obtained  in  \cite{BCLR,CR} for  $J$  strongly stable. 
These families are optimal  for many  applications, for instance for an effective study of  Hilbert schemes (see \cite{BLR}). However,
the strong stability of the monomial ideal
   $J$ is a rather limiting condition.

 In the present paper we
give an overall view on what can be said about the above question for an
 \emph{arbitrary} monomial ideal $J$,
enhancing some ideas  introduced  by Janet in \cite{J1,J2,J3}.
\\
The ideas we  mainly deal with are those of \emph{multiplicative variable} and \emph{complete system}, leading to the so called \emph{Janet decomposition} 
for terms.  
These concepts date back to the late   nineteenth century and the first decades of the twentieth.  In a historical note at the end of the paper we present 
a  detailed overview of their  appearances,  evolution and applications.

In Janet's theory the ideals $I$ are generated by those we call now \emph{involutive bases}, a set which contains as a subset  those that are also \Gr\ bases.
Indeed,  Janet develops his ideas
 assuming  to be in \emph{generic coordinates}. Hence  the homogeneous ideals $I$ and $J$ he considers satisfy  many good properties that always hold after having  
 performed a \emph{generic linear change of coordinates}.  In particular, $J$ is the generic initial ideal of $I$ w.r.t. the (deg)-revlex ordering. 

 From a computational
 point of view, a general change of coordinates    is  remarkably heavy.  
For this reason, we consider interesting enhancing the theory
and obtaining analogous results not assuming this hypothesis. 
Indeed, Janet's   ideas permit to go beyond this context and to recover results and  techniques of both  \Gr\  
basis theory and $J$-marked basis theory. In fact 
 we do not need to impose a term-ordering on the given polynomial ring. 

We  identify two essential features that are key points for most computations  in both the above  frameworks:
\begin{itemize}
 \item[I)] $I$   is generated by a set of   polynomials, marked on the terms of a suitable generating set of the monomial ideal $J$;
 \item[II)]there is a reduction process w.r.t. these marked polynomials,  that  is used to rewrite each element of $P/I$ as an element of the free 
 $A$-module $\langle \cN(J) \rangle$
\end{itemize}
Janet's notions of multiplicative variable and complete system allow to construct such marked set of generators for $I$ and to define an
efficient reduction process.

First of all, we examine and compare two different definitions of multiplicative variable that Janet presents  in \cite{J1,J2} and in \cite{J3}, that are
equivalent in general coordinates. We  underline  similarities and differences and  introduce the notion of \emph{stably complete} set of terms, when both
conditions hold. We show that every monomial ideal $J$ has only one stably complete set of generators (possibly made of infinitely many terms) that we call 
\emph{star set} and  denote by $\mathcal{F}(J)$.

Furthermore, we define a reduction procedure with respect to a homogeneous  set of polynomials  marked on  a stably complete system $\mathcal{F}(J)$ and prove its 
noetherianity.  As a consequence we are able to give a first, general answer to Problem \ref{Classif} .
  
Of course, the most interesting
 cases are those of ideals $J$ such that their generating stably complete set  $M$ is finite. We prove that   they are  the \emph{ quasi stable}
 ideals and that $\mathcal F(J)$ is their Pommaret basis. Among them,    those   such that $\mathcal{F}(J)$ coincides with the monomial basis  are exactly the 
 \emph{ stable} ones.\\
For the class of quasi stable ideals $J$   we   give a more complete and effective answer to  Problem \ref{Classif}.
Indeed, we prove that our description of $\Mf(J)$ is natural, in the sense that  it defines a    representable  functor from the category of $\ZZ$-algebras to the category of sets. Finally we   give an effective  procedure computing  equations for the  scheme that represents this functor.

After introducing all the notation (section \ref{Notat}),
 we introduce the Janet decomposition for semigroup
ideals and order ideals (section \ref{Jdec}).
\\
More precisely, we recall the notion
of multiplicative variables (\cite{J1,J2,J3})
 and complete system, pointing out that in the cited
 papers Janet uses two \emph{non equivalent} definitions
of multiplicative variables and
completeness.

For our purpose, we then introduce the notion
of \emph{stable completeness} as a
junction between the two notions.
 Given a complete system of terms $M$, we define a
decomposition for terms in $J=(M)$, called
\emph{star decomposition}, in analogy with
the star product introduced in \cite{BCLR,CR}
for strongly stable ideals.
\\
 In section
 \ref{StarSet} we define a very special stably complete set,
 i.e. the \emph{star set} $\mathcal{F}(J)$, introducing the
 \emph{stable ideals}, for which $\mathcal{F}(J)$ is the minimal
generating set of $J$  and the \emph{quasi stable ideals} for which 
the finiteness condition is respected.
\\
In section \ref{sec:reductiont},  we define
$M$-marked polynomials, bases and families,
for a complete set $M$, again generalizing
the definitions given for the generating set of a strongly stable ideal. Then, we define a noetherian
reduction process for homogeneous polynomial w.r.t.
the elements of a $M$-marked set.
In  section \ref{MFam} we associate to marked families a representable functor and give  a procedure computing equations for the scheme that defines it.

Finally, section \ref{Hist} contains the historical note.
\section{Notation.}\label{Notat}
Consider the polynomial ring
$P:=A[x_1,...,x_n]=\bigoplus_{d \in \NN}P_d$
in $n$ variables  and coefficients in the base
ring $A$.\\
When an order on the variables comes into play,
we consider $x_1<x_2<...<x_n$.\\
In case of $n=2,3$ we will usually set $P=A[x,y]$, $x<y$ and $P=A[x,y,z]$, $x<y<z$.\\
The symbol $<_{Lex}$ will denote the lexicographic
term-ordering according to this order
on the set  of variables.\\
The \emph{set of terms} of $P$ is
$$\mathcal{T}:=\{x^{\alpha}=x_1^{\alpha_1}\cdots x_n^{\alpha_n},\,(\alpha_1,...,\alpha_n)\in \NN^n \}$$
and we define also
$$\mathcal{T}[1,m]:=\mathcal{T}\cap k[x_1,...,x_m]=\{x_1^{\alpha_1}\cdots x_m^{\alpha_m}/\, (\alpha_1,...,\alpha_m)\in \NN^m \}.$$
For every polynomial $f\in P$,
$deg(f)$ is its usual degree and $deg_i(f)$ is
 its degree with respect to the variable $x_i$.\\
For each $p \in \NN$, and for all $W \subseteq P$,
$$W_p:=\{f \in W \ :  \ f  \,\textrm{homogeneous and }  \, deg(f)=p\};$$
in particular:
$$\mathcal{T}_p:=\{\tau \in \mathcal{T}\, : \, deg(\tau)=p\},\;\; \vert \mathcal{T}_p \vert =dim_k(P_p)={p+n-1\choose n-1}.$$
If $f \in P$, we
 denote
by $\supp (f)$
the \emph{support} of $f$, i.e. the set of all the terms in $\mathcal{T}$, appearing in $f$ with non-zero coefficient.\\

Fixed a polynomial $f \in \mathcal{P}$ and a term-ordering $<$, we call \emph{leading term} of $f$ the maximal element in $\supp(f)$
w.r.t. $<$ and we denote it $\cT(f)$. Its coefficient is the \emph{leading coefficient} of $f$.
Given a term $\tau=x_1^{\alpha_1}\cdots x_n^{\alpha_n} \in \mathcal{T}$,  we set
$$\max(\tau)=\max\{x_i\ :\  x_i\mid \tau \}   \textit \quad , \quad  \min(\tau)=\min\{x_i\  :\  x_i \mid \tau \}$$
the maximal and the minimal variable appearing
 in $\tau$ with non-zero exponent.
\begin{definition}\label{Predecessor}
Given a term $\tau \in \mathcal{T}$ and
a variable $x_j\mid \tau$, the term $\frac{\tau}{x_j}$ is the $j$-th \emph{predecessor} of $\tau$.
\end{definition}

\begin{definition}\label{Syzygy}
Let $F=\{  \tau_1,...,\tau_s\} \subseteq \mathcal{T}$
be an ordered subset of terms, generating an ideal $J=(F)$. The module
$$Syz(F)=\{(g_1,...,g_s)\in P^s,\, \sum_{i=1}^s g_i \tau_i=0 \} $$
 is the \emph{syzygy module } of $F$.\\
We denote an element in $Syz(F)$  as $(g_1,...,g_s)$  and we  call it \emph{syzygy among } $F$.
\end{definition}

\begin{definition}
A set $\cN   \subset \mathcal{T}$ is called
 \emph{order ideal} if $$ \forall s  \in \mathcal{T},
\ t \in \cN  : \ \  s\vert t \Rightarrow s \in \cN  .$$
\end{definition}
Observe that $\cN  $ is an order ideal if
and only if the complementary set
$I:=\mathcal{T}\setminus \cN  $ is a \emph{semigroup ideal},
i.e. $\forall t \in \mathcal{T},\, \tau \in I \Rightarrow t\tau \in I $.\\
 If $I$ is either a monomial ideal
 or a semigroup ideal, we will denote
by $\cN  (I)$ the order ideal $\cN  :=\mathcal{T}\setminus I$
and by
 $ \cG  (I)$ its \emph{ monomial basis}, namely the minimal
set of terms generating $I$.
\begin{definition}[\cite{RS}]
A \emph{(monic) marked polynomial} is a polynomial
 $f\in P$ together with a fixed term $\tau$ of $\supp (f)$,
 called \emph{head term of} $f$ and denoted by
$\Ht(f)$ and such that its coefficient is equal to $1_A$.
\end{definition}
We can extend to  marked polynomials the notion of
$S$-polynomial.

\section{Janet decomposition.}\label{Jdec}
In this section we loosely base on the paper  \cite{J1},
where  Janet
 first defines the notion of \emph{multiplicative variable}
 for a term $\tau$ with respect to a given  set $M\subseteq \mathcal{T}$.

For completeness' sake, we recall Janet's decomposition for  terms in the semigroup ideal generated by $M$ into disjoint classes.
 
 Each of them contains:
\begin{enumerate}
 \item  a term $\tau \in M$;
 \item the set of monomials obtained  multiplying
$\tau$ by products of multiplicative variables,
that we call  \emph{ offspring of} $\tau$ and denote $\off_M(\tau)$.
\end{enumerate}

The main difference with respect to Janet's papers is
that we remove the finiteness condition on $M$, showing that it is not necessary for our purposes.

\begin{definition}\cite[ppg.75-9]{J1}\label{multiplicative}
Let  $M\subset \mathcal{T}$ be a set of terms
 and  $\tau=x_1^{\alpha_1}\cdots x_n^{\alpha_n} $
be an element of $M$.
A variable $x_j$ is called \emph{multiplicative}
for $\tau$ with respect to $M$ if there is no term in
$M$ of the form
$\tau'=x_1^{\beta_1}\cdots x_j^{\beta_j}x_{j+1}^{\alpha_{j+1}} \cdots  x_n^{\alpha_n}$
with $\beta_j>\alpha_j$.
 We will denote by $\mult_M(\tau)$ the set of
multiplicative variables for $\tau$ with respect to $M$.
\end{definition}
\begin{definition}
With the previous notation, the \emph{offspring} of  $\tau $ with respect to $M$   is the set
$$\off_M (\tau):=\{\tau x_1^{\lambda_1} \cdots x_n^{\lambda_n} \,\vert \, \textrm{where } \lambda_j\neq 0 \textrm{ only if } x_j \textrm{ is multiplicative for }
\tau \textrm{ w.r.t. } M\}.$$
\end{definition}
\begin{example}
Consider the set
$M=\{x_1^3,x_2^3, x_1^4x_2x_3,x_3^2\}\subseteq k[x_1,x_2,x_3].$\\
Let $\tau=x_1^3$, so $\alpha_1=3,\,\alpha_2=\alpha_3=0$.
The variable $x_1$ is multiplicative for $\tau$ w.r.t $M$
since there are no terms
 $\tau'=x_1^{\beta_1}x_2^{\beta_2}x_3^{\beta_3}\in M$ satisfying both conditions:
\begin{itemize}
 \item $\beta_1 > 3$;
 \item $\beta_2=\beta_3=0$.
\end{itemize}
On the other hand, $x_2$ is not multiplicative for $\tau$ since $\tau''=x_1^{\gamma_1}x_2^{\gamma_2}x_3^{\gamma_3}=x_2^3\in M$: \\$\gamma_2=3>0=\alpha_2$,
$\gamma_3=\alpha_3=0$.\\
Similarly, $x_3$ is not multiplicative since $x_3^2 \in M$.\\
In conclusion, we have $\mult_M(\tau)=\{x_1\}$.
\end{example}

\begin{remark}\label{oss: solo tau}  Observe that, by definition of multiplicative variable, the only element in $\off_M (\tau)\cap M$ is $\tau$ itself.
\\
Indeed, if $\tau\in M$ and also $\tau \sigma \in  M $ for
a non constant term  $\sigma$, then $max(\sigma)$ cannot be
multiplicative for $\tau$, hence
$\tau \sigma \notin \off_M(\tau)$. \end{remark}

In paper \cite{J1}, Janet defines multiplicative
variables as in Definition \ref{multiplicative} and he
provides both a decomposition for the semigroup ideal
$\cT  (M)$ generated by a finite set of terms $M$
and a decomposition for the complementary set $\cN  (M)$.
\\
On the other hand, in \cite{J2,J3}, he defines
 multiplicative variables in the following way.

\begin{class}\label{Molt2}
A variable $x_j$ is \emph{multiplicative} for
$\tau \in \mathcal{T}$ if and only if $x_j \leq \min(\tau)$.
\end{class}

These two definitions of multiplicative
variables appear to be very different.\\
First of all, in the first formulation,
the set of multiplicative
variables for a term in $M$ depends on the whole set $M$, while in the second it is completely
independent on the set $M$.
Indeed, the two notions are not equivalent
for a general set $M$, as shown by
the following examples.

\begin{example}
In $k[x_1,x_2,x_3]$ consider the ideal $I=(x_1^2x_2,x_1x_2^2)$ and let
 $M$ be its monomial basis.
Then, $\mult_M(x_1^2x_2)=\{x_1,x_3\}$ and $\mult_M(x_1x_2^2)=\{x_1,x_2,x_3\}$
while only $x_1$ can be multiplicative
according to the other notion of multiplicative variable.
\end{example}
\begin{example}\label{Es3.6}
Taken the set $M=\{x_1^2x_2,x_1x_2^2\}\subseteq k[x_1,x_2]$, we get
$\mult_M(x_1x_2^2)=\{x_1,x_2\} ,$
while of course  $x_1\leq \min(x_1x_2^2)$ but $x_2 >\min(x_1x_2^2)$.
\end{example}

However, they are equivalent in Janet setting, that is if $M$ is the generating set of the generic initial ideal of homogeneous ideals $I$.\\
More generally, we will see that they
turn out to be equivalent
also if  $M$ is  the monomial
basis  $\cG  (J)$ of a
strongly stable ideal $J$ and if $M$ is
 the special set of generators
 of  any monomial ideal $J$ that
we will introduce in section
\ref{StarSet} and denote
by $\mathcal F(J)$.

We will see that   stronger results
can be proved  when a set $M$ is such that
\emph{the two definitions} of multiplicative variables
\emph{coincide}.

The following definition will be a key point  in this paper.

\begin{definition}\cite[ppg.75-9]{J1}\label{Complete}
A set of terms $M\subset \mathcal{T}$ is called
\emph{complete} if for every $ \tau \in M$ and
 $ x_j\notin mult_M(\tau)$, there exists $ \tau' \in M$
such that $x_j \tau \in \off_M (\tau')$.

 Moreover,  $M$ is \emph{stably complete}  if it
 is complete and for every $\tau\in M$ it holds
  $\mult_M(\tau)=\{ x_i \  \vert \  x_i\leq \min(\tau) \}$.
\\
If a set $M$ is stably complete and finite, then it is the \emph{Pommaret basis} of $J=(M)$ and we denote it by $\mathcal{H}(J)$.
\end{definition}
\begin{remark}\label{singleton}
If $M=\{\tau\}\subseteq \mathcal{P}$ is a singleton, it is complete, with $\mult(\tau)=\{x_1,...,x_n\}$.
\end{remark}

Let us examine some examples.
\begin{example}\label{completex1}
In $k[x_1,x_2,x_3]$ consider the ideal $I=(x_1^2,x_1x_2,x_3)$.\\
Both $M_0 =\{x_1^2,x_1x_2,x_3\}$ and each generating set  of $I$
with the shape \\
$M_i =\{x_1^2,x_1x_2,x_3,x_2x_3,...,x_2^ix_3\}$ are complete
 systems of terms.
In fact, for $M_0$:
\begin{itemize}
 \item[$\mathbf{-}$] $\mult_{M_0}(x_1^2)=\{x_1\}$,
$x_1^2x_2 \in \off_{M_0}(x_1x_2)$, $x_1^2x_3 \in \off_{M_0}(x_3)$;
 \item[$\mathbf{-}$] $\mult_{M_0}(x_1x_2)=\{x_1,x_2\}$,
 $x_1x_2x_3 \in \off_{M_0}(x_3)$;
 \item[$\mathbf{-}$] $\mult_{M_0}(x_3)=\{x_1,x_2,x_3\}$.
\end{itemize}

For $M_i$, $i\geq 1$:
\begin{itemize}
 \item[$\mathbf{-}$] $\mult_{M_i}(x_1^2)=\{x_1\}$, $x_1^2x_2 \in \off_{M_i}(x_1x_2)$, $x_1^2x_3 \in \off_{M_i}(x_3)$;
 \item[$\mathbf{-}$] $\mult_{M_i}(x_1x_2)=\{x_1,x_2\}$,
 $x_1x_2x_3\in \off_{M_i}(x_2x_3)$;
 \item[$\mathbf{-}$] $\mult_{M_i}(x_3)=\{x_1,x_3\}$,
$x_2x_3 \in \off_{M_i}(x_2x_3)$;
\item[$\mathbf{-}$] $\mult_{M_i}(x_2^jx_3)=\{x_1,x_3\}$,
 $x_2^{j+1}x_3 \in \off_{M_i}(x_2^{j+1}x_3)$, $0\leq j<i$;
 \item[$\mathbf{-}$] $\mult_{M_i}(x_2^ix_3)=\{x_1,x_2,x_3\}$.
\end{itemize}
\end{example}

\begin{example}\label{ComplInf}
Consider the ideal $J=(xy) \triangleleft k[x,y]$.\\
The monomial basis $M_0=\cG  (J)=\{xy\}$
 is a complete system with $\mult_{M_0}(xy)=\{x,y\}$.
\\
Also the set 
$M=\{x^hy\ \vert \ h \geq 1\}\subseteq k[x,y]$, $x<y$,
is a complete system, again according to the first definition. It generates the same ideal $(xy)$, but has infinitely many elements.
Anyway, it is not stably complete.
In fact, for each $x^hy \in M$, $\mult_M(x^hy)=\{y\}$,
since no terms
of the form $x^ly^e$ with $e>1$ belong to $M$;
on the other hand  $x\notin \mult_M(x^hy)$ since
$x^{h+1}y\in M$.
\end{example}

\begin{example}\label{nonbasta}
Let $M$ be the set of terms $\{x,y^2\}$ in
 $ k[x,y]$, with $x<y$.
\\
The multiplicative variables for every term in $M$
 are those lower than or equal to its minimal one:
\\
$\mult(x)=\{x\}$ $\mult(y^2)=\{x,y\}$.
\\
However, $M$ is not complete since $yx$ does
not belong to the offspring of any term in $M$.
\end{example}
The following example shows that a complete generating set of terms
can loose completeness when the ideal is enlarged.
\begin{example}\label{completex2}
Let $M=\{x^2,xy\}\subset k[x,y]$ and $J=(M)$.
It is a complete system, but it is not stably complete, since $y$ is multiplicative for $xy$, although $\min(xy)=x$. 
\\
Adding to $M$ a term in $\cN(J)$, we get a new set $M_0$ and $J_0=(M_0)$, whose Janet decomposition clearly changes.
For example, if $M_0=\{x^2,xy,y^2\}$ we get a stably complete system.\\
On the other hand, if $M_0=\{x^2,xy,y^3\} $ the
system is not complete anymore, since $xy^2$ does not
belong to the offspring of any term in the set.
\end{example}

The following  technical lemma will be very useful
 throughout the paper. As a first application, we will
 prove that a system of terms $M$ (possibly infinite)
 is \emph{complete} if and only
if the offsprings   of the elements in $M$ form a
partition of the semigroup ideal generated by $M$.
\begin{lemma}\cite[pg.23]{J4}\label{DivMagLex}
Let $\tau$, $\tau'$  be  elements of a  set of terms
$M$ and $x_j$ be a variable such that
$x_j \notin \mult_M(\tau)$ and $x_j\tau \in \off_M (\tau')$.
 Then
$\tau<_{Lex}\tau'$.  If, moreover,   $x_j\leq min(\tau)$,
then  $\tau x_j=\tau' \in M$.
\end{lemma}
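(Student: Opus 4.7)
The plan is to set up coordinates and translate the hypothesis into exponent equations, then use the definition of multiplicativity to block the wrong orderings.

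Write $\tau = x_1^{\alpha_1}\cdots x_n^{\alpha_n}$ and $\tau' = x_1^{\alpha'_1}\cdots x_n^{\alpha'_n}$. The hypothesis $x_j\tau \in \off_M(\tau')$ means $x_j\tau = \tau' \cdot x_1^{\lambda_1}\cdots x_n^{\lambda_n}$ where $\lambda_i = 0$ whenever $x_i \notin \mult_M(\tau')$. Comparing exponents yields $\alpha_i = \alpha'_i + \lambda_i$ for $i \neq j$ and $\alpha_j + 1 = \alpha'_j + \lambda_j$. Note first that $\tau \neq \tau'$: otherwise $x_j \tau = \tau \cdot x_j \in \off_M(\tau)$ would force $x_j \in \mult_M(\tau)$, contradicting the hypothesis $x_j \notin \mult_M(\tau)$.

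For the first claim, I would argue by contradiction assuming $\tau >_{Lex} \tau'$. Let $k$ be the maximal index with $\alpha_k \neq \alpha'_k$, so $\alpha_k > \alpha'_k$ and $\alpha_i = \alpha'_i$ for every $i > k$. In either case $k \neq j$ (where $\lambda_k = \alpha_k - \alpha'_k > 0$) or $k = j$ (where $\lambda_k = \alpha_k - \alpha'_k + 1 \geq 2 > 0$), the coefficient $\lambda_k$ is strictly positive, hence by the defining property of the offspring $x_k \in \mult_M(\tau')$. But then $\tau \in M$ has the shape $x_1^{\alpha_1}\cdots x_k^{\alpha_k} x_{k+1}^{\alpha'_{k+1}}\cdots x_n^{\alpha'_n}$ (using $\alpha_i = \alpha'_i$ for $i > k$) with $\alpha_k > \alpha'_k$, directly contradicting Definition \ref{multiplicative} applied to $\tau'$ and $x_k$. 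Hence $\tau <_{Lex} \tau'$.

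For the second claim, suppose additionally $x_j \leq \min(\tau)$, so $\alpha_i = 0$ for all $i < j$; then from $\alpha_i = \alpha'_i + \lambda_i$ we get $\alpha'_i = \lambda_i = 0$ for $i < j$. Now let $m$ be the maximal index with $\alpha_m < \alpha'_m$ (which exists since $\tau <_{Lex} \tau'$). If $m > j$, the relation $\alpha_m = \alpha'_m + \lambda_m \geq \alpha'_m$ gives a direct contradiction. If $m < j$, then $m < j$ forces $\alpha'_m = 0$ by the previous sentence, incompatible with $\alpha'_m > \alpha_m \geq 0$. Therefore $m = j$, and $\lambda_j = \alpha_j + 1 - \alpha'_j \leq 0$ together with $\lambda_j \geq 0$ forces $\lambda_j = 0$ and $\alpha'_j = \alpha_j + 1$. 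Maximality of $m = j$ yields $\alpha_i = \alpha'_i$, hence $\lambda_i = 0$, for every $i > j$. Combining with $\lambda_i = 0$ for $i < j$ and $\lambda_j = 0$, the monomial $\prod x_i^{\lambda_i}$ is trivial, so $x_j\tau = \tau' \in M$, as claimed.

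The main obstacle is simply the bookkeeping over the three positions of the pivoting index ($k$ or $m$) relative to $j$; each case is short, but they must be disentangled carefully because the exponent equation at $i = j$ is off by one from the others, which is exactly what makes the strict inequality $\tau <_{Lex} \tau'$ possible and ultimately forces $\tau x_j = \tau'$ under the minimality hypothesis.
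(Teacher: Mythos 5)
Your proof is correct, and the first half is essentially the paper's own argument: assume $\tau >_{Lex} \tau'$, locate the top variable $x_k$ where the exponents of $\tau$ and $\tau'$ differ, observe that it must divide the multiplier $\prod x_i^{\lambda_i}$ and hence be multiplicative for $\tau'$, and then contradict Definition~\ref{multiplicative} using $\tau \in M$ itself as the forbidden witness. For the second claim the two arguments genuinely diverge: the paper assumes $\sigma' \neq 1$ and splits on whether $x_j \mid \sigma'$, invoking Remark~\ref{oss: solo tau} in the first case and the lex chain $\tau' \leq_{Lex} \tau'\sigma'/\max(\sigma') <_{Lex} \tau'\sigma'/x_j = \tau$ in the second, whereas you kill all the exponents $\lambda_i$ directly by coordinate bookkeeping. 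Your route is more pedestrian but self-contained (no appeal to Remark~\ref{oss: solo tau}) and makes visible exactly where the off-by-one at $i = j$ matters. One step needs a one-sentence patch: from your definition of $m$ as the maximal index with $\alpha_m < \alpha'_m$, maximality alone only gives $\alpha_i \geq \alpha'_i$, i.e.\ $\lambda_i \geq 0$, for $i > j$ --- not the equality you assert. To conclude $\lambda_i = 0$ there you must also rule out $\alpha_i > \alpha'_i$ for $i > j$; this does follow, because the topmost index at which $\tau$ and $\tau'$ differ must carry $\alpha < \alpha'$ by the already-established $\tau <_{Lex} \tau'$, so that index coincides with $m = j$ and hence $\alpha_i = \alpha'_i$ for all $i > j$. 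With that sentence inserted the argument is complete.
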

\begin{proof}  First of all,  we observe that
 $\tau \neq \tau'$, since $x_j\notin mult_M(\tau)$.
By definition of offspring,
we have that $\tau x_j=\tau' \sigma'$,
where $\sigma'$ is a product of
multiplicative variables for $\tau'$.
Let us assume by contradiction that $\tau>_{Lex}\tau'$
and let $x_i$ be the maximal variable such
 that $deg_i(\tau)>deg_i(\tau')$. Then, $x_i\vert \sigma'$, hence $x_i\in mult_M(\tau')$,
but this is impossible by definition of
 multiplicative variable, since also $\tau$ is
in $M$.

Now let us assume that  $x_j\leq min(\tau)$
and  $\sigma' \neq 1$. If   $x_j\vert \sigma'$,
then $\tau =\frac{\sigma'}{x_j}\tau'\in M\cap \off_M(\tau')$,
which is not possible by Remark \ref{oss: solo tau}.
If, on the contrary,  $x_j\not\vert \sigma'$ we get
 a contradiction with the previous assertion,
since in this case $\tau' \leq_{Lex}\frac{\tau'\sigma'}{\max(\sigma')}<_{Lex}\frac{\tau'\sigma'}{x_j}=\tau$.
\end{proof}

\begin{theorem}\label{DISG}
Let  $M$ be a  set of terms (possibly infinite).

If   $ \tau, \tau' \in M$ and  $\tau \neq \tau'$,
then $\off_M (\tau)\cap \off_M (\tau')=\emptyset.$

If, moreover, $M$ is complete and   $\cT(M)$ is the semigroup
 ideal it generates,
then $\forall \gamma \in \cT(M)$, $\exists \tau \in M$
such that $\gamma \in \off_M(\tau)$.
Hence, the offsprings of the elements in
$M$ give a partition of $\cT(M)$.
\end{theorem}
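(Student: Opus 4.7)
The statement splits into two parts, and I would handle them separately.

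For the disjointness claim, I plan to argue by contradiction. Suppose $\gamma \in \off_M(\tau)\cap \off_M(\tau')$ with $\tau\neq \tau'$. Let $j$ be the largest index where $\deg_j(\tau)\neq \deg_j(\tau')$; without loss of generality $\deg_j(\tau)>\deg_j(\tau')$. From the expression $\gamma = \tau'\sigma'$ where $\sigma'$ involves only variables multiplicative for $\tau'$, we get $\deg_j(\sigma') = \deg_j(\gamma)-\deg_j(\tau') = \deg_j(\tau)-\deg_j(\tau')>0$, so $x_j$ must be multiplicative for $\tau'$ w.r.t.\ $M$. But the existence of $\tau \in M$, which agrees with $\tau'$ in all exponents of index strictly larger than $j$ (by maximality of $j$) and has a strictly larger exponent in $x_j$, violates Definition \ref{multiplicative}. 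Contradiction.

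For the covering claim, assume $M$ is complete and fix $\gamma\in \cT(M)$. The set of divisors of $\gamma$ that lie in $M$ is nonempty and finite (it is contained in the finite set of divisors of $\gamma$). I would pick $\tau$ maximal among them with respect to $<_{Lex}$ and write $\gamma = \tau\sigma$. The goal is to show that every variable dividing $\sigma$ is multiplicative for $\tau$ w.r.t.\ $M$, which would yield $\gamma\in \off_M(\tau)$. Suppose for contradiction some $x_j$ divides $\sigma$ but is not multiplicative for $\tau$. Completeness supplies $\tau'\in M$ with $x_j\tau\in \off_M(\tau')$, so in particular $\tau'$ divides $x_j\tau$ and hence divides $\gamma$. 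Lemma \ref{DivMagLex} gives $\tau <_{Lex}\tau'$, contradicting the maximality of $\tau$ among the divisors of $\gamma$ in $M$.

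The two parts together show exactly that the offsprings of the elements of $M$ form a partition of $\cT(M)$.

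I expect the main subtlety to be in part one, where one has to pin down which exponent of $\sigma'$ must be strictly positive and then invoke precisely the asymmetric form of Definition \ref{multiplicative} (comparison on coordinates strictly above $j$, but $\beta_j>\alpha_j$ allowed). Part two is straightforward once Lemma \ref{DivMagLex} is available; the only non-obvious move is to pick the Lex-maximal divisor in $M$, rather than e.g.\ a minimal one, so that completeness can be levered into a strict Lex-increase and thereby a contradiction with finiteness of divisors.
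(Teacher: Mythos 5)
Your proposal is correct and follows essentially the same route as the paper: for disjointness, both arguments isolate the largest variable where the two terms' exponents differ and use it to contradict Definition \ref{multiplicative}; for the covering claim, both pick the $<_{Lex}$-maximal divisor of $\gamma$ lying in $M$ and combine completeness with Lemma \ref{DivMagLex} to force an impossible strict Lex-increase. (Only a cosmetic slip: $\deg_j(\gamma)-\deg_j(\tau')$ equals $\deg_j(\tau)+\deg_j(\sigma)-\deg_j(\tau')$, so the middle equality in your chain should be ``$\geq$''; the conclusion $\deg_j(\sigma')>0$ is unaffected.)
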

\begin{proof}
To prove the first assertion, let us assume
by contradiction that
$\tau\sigma=\tau'\sigma'  \in \off_M (\tau)\cap \off_M (\tau')\neq\emptyset$
and let
$\tau >_{lex} \tau' $.
If $x_i$ is the maximal variable such that
 $deg_i(\tau)>deg_i(\tau')$, then $x_i\vert \sigma'$.
By definition of offspring,  $x_i\in mult_M(\tau')$,
but this is impossible by definition of multiplicative
 variable, since also $\tau$ is
in $M$.
\medskip

Now we assume that $M$ is complete and
prove the second fact. 
We argue by contradiction.
Suppose $\cT(M)\supsetneq O:=\bigcup_{\sigma \in M} \off_M (\sigma)$
and take any term $\gamma $ in $\cT(M)\setminus O$.
 As $M$ generates $\cT(M)$,  there are terms
  in $M$ that divide $\gamma$: let $\tau$
 be the one  which is  maximal  with respect to  $<_{lex}$.
 If $\gamma=\tau \sigma$,
the term $\sigma$ contains at least a variable
$x_i$ which is not multiplicative for $\tau$,
 since $\tau \sigma\notin \off_M(\tau)$.
Then $\gamma=\tau x_i \eta$ and $\tau x_i \notin \off_M(\tau)$.

  By the
 completeness of  $M$, we have  $\tau x_i \in O$,
namely there is a term $\tau' \in M$ such that $\tau x_i=\tau' \sigma' \in \off_M(\tau')$.
 By Lemma \ref{DivMagLex} \emph{i)}, $\tau'>_{Lex} \tau$,
 and this is not possible since
$\tau'\vert\gamma=\tau x_i \eta=\tau'\sigma'\eta$.
\end{proof}

Thanks to the previous  result,
if $M$ is a   complete system,
each term in $\cT(M)$ can be
written in a unique way  as a product of
\begin{enumerate}
 \item an element $\tau \in M$;
 \item a term $x^{\eta}=x_i^{\eta_i}\cdots x_j^{\eta_j}$, with $x_i,...,x_j \in \mult_M(\tau).$
\end{enumerate}
This fact suggests the following
\begin{definition}\label{StarProd}
Let $M$ be a complete system of terms.
The \emph{star decomposition} of every term
 $\gamma\in (M)$ with respect to $M$,   is the unique   couple of terms $(\tau,\eta)$, with $\tau \in M$,
such that $\gamma=\tau \eta$  and  $\gamma \in \off_M(\tau)$.  If $(\tau,\eta)$ is the star decomposition of $\gamma$ with respect to $M$, we will write
$\gamma=\tau \ast_M \eta$.
\end{definition}

\begin{remark} From the results stated above,
we   obtain the following explicit formula for
the Hilbert function of $P/(M)$: $$H(P/(M))(k)={k+n\choose n}-\sum_{\tau \in M\,deg(\tau)\leq k}{k-deg(\tau)+
s_{\tau}-1  \choose s_{\tau}-1},$$
where $s_{\tau}$ is
the number of multiplicative
variables for $\tau$ w.r.t $M$ and we set equal to
$0$ every  binomial
with a negative  numerator or a negative
denominator.
Thus, this formula makes sense even if
$\vert M\vert =\infty$, since   for every $k$ there are
 only finitely many non-zero summands.\\
If $M$ is a finite set of
terms and $r$ is the maximal
degree of its elements,
this formula gives
 the value of the
Hilbert polynomial for every $k\geq r$.
\end{remark}

The following lemma will be very useful for the
reduction process we will define in section \ref{sec:reductiont}.
\begin{lemma}\label{lem: minorelex}
 Let $M$ be a stably complete system of terms and let
 $\gamma $ be a term such that $\gamma=\tau \ast_M \eta$
and also $\gamma=\sigma \eta'$ with $\sigma \notin \cT(M)$.
\\
 Then $\eta' >_{Lex} \eta$.
\end{lemma}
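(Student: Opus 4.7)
The plan is to argue by contradiction: assume $\eta' \leq_{Lex} \eta$ and derive that $\tau \mid \sigma$, which contradicts $\sigma \notin \cT(M)$ (since $\tau \in M$). The one structural input to exploit is stable completeness, which tells us exactly that $\eta$ lies in $A[x_1,\dots,x_m]$ where $x_m:=\min(\tau)$. Indeed, $\eta$ is a product of multiplicative variables of $\tau$, and under stable completeness these are precisely the variables $x_j$ with $j\leq m$; so $\deg_i(\eta)=0$ for all $i>m$, while of course $\deg_i(\tau)=0$ for all $i<m$. This is the only place where the hypothesis ``$M$ stably complete'' (as opposed to merely complete) enters the argument.

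First I would dispatch the equality case: if $\eta=\eta'$ then $\tau\eta=\sigma\eta$ forces $\sigma=\tau\in M$, contradicting $\sigma\notin\cT(M)$. So we may assume $\eta>_{Lex}\eta'$, and let $k$ be the largest index with $\deg_k(\eta)\neq\deg_k(\eta')$; by definition of $<_{Lex}$ (recall $x_1<\cdots<x_n$), we have $\deg_k(\eta)>\deg_k(\eta')$ and $\deg_i(\eta)=\deg_i(\eta')$ for all $i>k$. Since $\deg_k(\eta)>0$ and $\eta$ has no variables exceeding $x_m$, we get $k\leq m$.

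Next I would compare $\sigma$ and $\tau$ coordinatewise, using $\tau\eta=\sigma\eta'$. For every $i>k$ the equality $\deg_i(\eta)=\deg_i(\eta')$ implies $\deg_i(\sigma)=\deg_i(\tau)$; in particular, in the subcase $k<m$ this equality already holds for all $i\geq m$, whereas for $i<m$ we trivially have $\deg_i(\tau)=0\leq \deg_i(\sigma)$. In the subcase $k=m$, the same identity combined with $\deg_m(\eta)>\deg_m(\eta')$ gives $\deg_m(\sigma)>\deg_m(\tau)$, and once again $\deg_i(\tau)=0\leq\deg_i(\sigma)$ for $i<m$. In both subcases we conclude $\tau\mid\sigma$, hence $\sigma\in\cT(M)$, which is the desired contradiction.

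I do not expect any serious obstacle: the whole argument is exponent bookkeeping. The only point that requires care is to make sure the conventions on $<_{Lex}$ are consistent (highest variable first, since $x_1<\cdots<x_n$), so that ``largest index where they differ'' really is the relevant one for lex comparison, and to check that stable completeness — not just completeness — is what restricts $\supp(\eta)$ to variables $\leq\min(\tau)$. The example \ref{completex2} already shows that without stable completeness the restriction on $\supp(\eta)$ can fail, which is precisely why the statement needs $M$ stably complete.
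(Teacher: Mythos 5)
Your proof is correct and follows essentially the same route as the paper's: the paper's own argument also reduces to showing that $\eta'\le_{Lex}\eta$ would force $\eta'\mid\eta$ and $\tau\mid\sigma$, contradicting $\sigma\notin\cT(M)$, using stable completeness only through $\max(\eta)\le\min(\tau)$. You merely spell out the coordinatewise exponent comparison (and the trivial equality case) that the paper leaves implicit, so there is nothing to change.
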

\begin{proof}
 By definition of stable completeness,
 $\min(\tau)\geq \max(\eta)$. If $\eta'<_{Lex} \eta$,
then $\eta'\vert \eta$ and $\tau \vert \sigma$.
 This is not possible since
 $\tau \in \cT(M)$ and $\sigma \notin \cT(M)$.
\end{proof}

\section{Star set and quasi stable ideals}\label{StarSet}

We introduce here a special set of terms.
We will prove that it is a complete system
with many interesting properties in common with
 the minimal monomial
basis of strongly stable ideals.
\begin{definition}
Given a  monomial ideal $J\triangleleft P$ we define the \emph{star set} as
$$\mathcal{F}(J):=\{x^{\alpha} \in \mathcal{T}\setminus \cN  (J) \, \vert \, \frac{x^{\alpha}}{\min(x^{\alpha})} \in \cN  (J) \}.$$
\end{definition}
\begin{theorem}\label{moltiplicative} For every monomial ideal $J$, the star set $\mathcal{F}(J)$ is the unique stably complete system    
of generators of $J$. Hence, if $M$ is stably complete, $M=\mathcal{F}((M))$.
\end{theorem}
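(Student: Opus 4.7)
The proof splits naturally into three tasks:
(i) $\mathcal{F}(J)$ generates $J$;
(ii) $\mathcal{F}(J)$ is stably complete;
(iii) any stably complete system of generators of $J$ coincides with $\mathcal{F}(J)$.
For (i) I would induct on the degree of $\sigma\in J$: either $\sigma/\min(\sigma)\in\cN(J)$, so $\sigma\in\mathcal{F}(J)$ by definition, or $\sigma/\min(\sigma)\in J$ has strictly smaller degree and factors through $\mathcal{F}(J)$ by the inductive hypothesis, whence $\sigma$ does too.

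For (ii) fix $\tau=x_k^{\alpha_k}\cdots x_n^{\alpha_n}\in\mathcal{F}(J)$ with $\min(\tau)=x_k$, and aim at $\mult_{\mathcal{F}(J)}(\tau)=\{x_i\leq x_k\}$. The easy inclusion $\{x_i\leq x_k\}\subseteq\mult_{\mathcal{F}(J)}(\tau)$ is proved by contradiction: any putative $\tau'=x_1^{\beta_1}\cdots x_j^{\beta_j}x_{j+1}^{\alpha_{j+1}}\cdots x_n^{\alpha_n}\in\mathcal{F}(J)$ with $j\leq k$ and $\beta_j>\alpha_j$ satisfies $\min(\tau')\leq x_j\leq x_k$, so dividing by $\min(\tau')$ leaves the exponents of $x_k,\ldots,x_n$ in $\tau'$ untouched and an explicit exponent check yields $\tau\mid\tau'/\min(\tau')$, contradicting $\tau'/\min(\tau')\in\cN(J)$. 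For the reverse inclusion, given $x_j>x_k$, I would exhibit a non-multiplicative witness by taking $\sigma$ of minimal degree in the non-empty set $\Sigma:=\{\gamma\in J:\gamma=x_1^{\gamma_1}\cdots x_j^{\gamma_j}x_{j+1}^{\alpha_{j+1}}\cdots x_n^{\alpha_n},\ \gamma_j>\alpha_j\}$ (which contains $x_j\tau$), and case-analyzing $\min(\sigma)=x_l$: the cases $l>j$, $l<j$, and $l=j$ with $\gamma_j-1>\alpha_j$ either violate $\gamma_j>0$ or contradict minimality of $\sigma$. The only delicate subcase is $l=j,\ \gamma_j=\alpha_j+1$, where $\sigma/x_j=x_j^{\alpha_j}\cdots x_n^{\alpha_n}$ divides $\tau/\min(\tau)$, so assuming $\sigma/\min(\sigma)\in J$ would drag $\tau/\min(\tau)$ into $J$, contradicting $\tau\in\mathcal{F}(J)$; hence $\sigma\in\mathcal{F}(J)$ is the required witness. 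I expect this last subcase, where the hypothesis $\tau\in\mathcal{F}(J)$ enters essentially, to be the main obstacle. Completeness of $\mathcal{F}(J)$ then follows by a descent: for $x_j>\min(\tau)$, repeatedly divide $x_j\tau$ by its minimum variable until one first lands in $\mathcal{F}(J)$, obtaining $x_j\tau=\tau'\eta$ with $\eta$ the product of the successive minima; the elementary monotonicity $\min(\rho/\min(\rho))\geq\min(\rho)$ forces $\max(\eta)\leq\min(\tau')$, so by the stable completeness just proved $\eta$ is a product of multiplicative variables for $\tau'$, i.e.\ $x_j\tau\in\off_{\mathcal{F}(J)}(\tau')$.

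For (iii) let $M$ be any stably complete system with $J=(M)$; by Theorem \ref{DISG} the offsprings of the elements of $M$ partition $J$. For $M\subseteq\mathcal{F}(J)$: if some $\tau\in M$ had $\tau/\min(\tau)\in J$, it would lie in $\off_M(\sigma)$ for some $\sigma\in M$ with $\sigma\mid\tau/\min(\tau)\mid\tau$; since $\sigma\mid\tau$ forces $\min(\sigma)\leq\min(\tau)$, stable completeness gives $\tau\in\off_M(\sigma)$, and disjointness with $\off_M(\tau)$ forces $\sigma=\tau$, contradicting $\tau\nmid\tau/\min(\tau)$. For $\mathcal{F}(J)\subseteq M$: writing $\tau\in\mathcal{F}(J)$ as $\tau=\sigma\xi\in\off_M(\sigma)$ with $\max(\xi)\leq\min(\sigma)$, if $\xi\neq 1$ then $\min(\tau)=\min(\xi)$ divides $\xi$ and $\tau/\min(\tau)=\sigma\cdot(\xi/\min(\tau))\in J$ contradicts $\tau\in\mathcal{F}(J)$; hence $\xi=1$ and $\tau=\sigma\in M$.
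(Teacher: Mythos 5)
Your proposal is correct and follows essentially the same route as the paper: the central device in both is the descent $\rho\mapsto\rho/\min(\rho)$, stopped at the last term still lying in $J$, which produces the elements of $\mathcal{F}(J)$ needed both to witness non-multiplicativity and to establish completeness, and uniqueness is likewise extracted from the offspring partition of Theorem~\ref{DISG}. Two small remarks: in part (iii) the inequality should read $\min(\tau)\leq\min(\sigma)$ --- divisibility $\sigma\mid\tau$ gives this direction, and it is the one your conclusion $\tau\in\off_M(\sigma)$ actually needs --- and you are more explicit than the paper on the inclusion $M\subseteq\mathcal{F}(J)$ and on the fact that $\mathcal{F}(J)$ generates $J$, both of which the paper leaves essentially implicit.
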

\begin{proof}
Let $\tau:= x_k^{\alpha_k}\cdots x_n^{\alpha_n}$ be any monomial in
$\mathcal{F}(J)$.

Assume $x_i$ is not multiplicative, so that $x_i\tau\in J$,
$x_i\tau=\tau'\sigma',\tau'\in M$. Then Lemma \ref{DivMagLex} implies
$\tau<_{Lex}\tau'$ whence $x_i>\min(\tau)$.

Let $x_i> x_k:=min(\tau)$ and set $\sigma_0:=\tau x_i$,
$\sigma_r:=\frac{\sigma_{r-1}}{min(\sigma_{r-1})}$ for $r=1\dots,
\alpha_k+\dots +\alpha_{i-1}$ and note that $x_i^{\alpha_i}\cdots
x_n^{\alpha_n}\notin J$,
since it divides
$\frac{\tau}{min(\tau)}$, while $\sigma:=\sigma_0\in J$, since it is a multiple
of $\tau$.
Then, in the sequence of terms $\sigma_i, 0\leq i\leq \alpha_k+\dots
+\alpha_{i-1}$, we
find an element $\sigma_j$ that belongs to $J$,
while the following one does not.

Then $\sigma_j\in  \mathcal{F}(J)$,
so that $x_i\tau\in\off_{\mathcal{F}(I)} (\sigma_j)$ and
$x_i$ is not multiplicative for
$\tau$ w.r.t. $\mathcal{F}(I)$.

\medskip

Take $\tau=x_k^{\alpha_k}\cdots x_n^{\alpha_n}\in \mathcal{F}(J) $,
and a  variable $x_i\notin \mult_{\mathcal F(J)}(\tau)$.
By the previous result
$x_i>x_k=\min(\tau)$. By definition of non-multiplicative variable,
there is a term $\sigma'=x_i^{t}\ x_{i+1}^{\alpha_{i+1}}\cdots x_n^{\alpha_n}\in
\mathcal{F}(J)$, for some
integer $t>\alpha_i$. Let us consider the minimum one.

If $t=\alpha_i +1$, then
$x_i \tau =x_k^{\alpha_k}\cdots x_i^{t}\cdots x_n^{\alpha_n}\in
\off_{\mathcal{F}(J)} (\sigma')$.

If, on the contrary, $t>\alpha_i+1$, then  $\sigma''=x_i^{\alpha_i+1}\cdots
x_n^{\alpha_n}\in \cN  (J)$ by definition.
Let us consider, as in the
previous proof, the sequence of terms
$\sigma_0:=\tau x_i\in J$, $\sigma_r:=\frac{\sigma_{r-1}}{min(\sigma_{r-1})}$
for $r=1\dots,\sum_{j=k}^{k-1}\alpha_j$.
Since the  last one is $\sigma''$, we can
find in this  sequence a suitable $\sigma_j\in I$
such that $\sigma_{j+1}\in \cN(J)$, that is $\sigma_j\in \mathcal
F(J)$ and $x_i\tau \in \off_{\mathcal F(J)}(\sigma_j)$.

\medskip

In order to prove that every stably complete set of terms $M$, with $J=(M)$ is
exactly $\mathcal{F}(J)$,
we first notice that clearly $  \cG (J) \subseteq M$ and $  \cG (J) \subseteq
\mathcal{F}(J)$.\\
Moreover, it is sufficient to prove that $\mathcal{F}(J) \subseteq M$. Let
$\sigma \in \mathcal{F}(J)$,
i.e. $\frac{\sigma}{\min(\sigma)}=\omega \in \cN(J).$ Then, there exists $\tau
\in M$ such that $\sigma
\in \off(\tau)$ and so $\sigma = \tau \eta$, with either $\eta =1$ or
$\max(\eta) \leq \min(\tau)$. \\
This implies that either $\tau=\sigma$ or $\tau \mid \omega$, but the second
alternative is impossible since both $\tau \in M$ and $\omega \in \cN(J).$

\end{proof}

\begin{remark}
\begin{enumerate}
\item[i.]
For an arbitrary monomial ideal $J$
the set $\mathcal{F}(J)$ can be infinite.
For example, if
$J=(x)\triangleleft  k[x,y],\, x<y$,
then $\mathcal{F}(J)=\{xy^n  \ \vert \  n \in \NN \}$.
\item[ii.]
Not all the complete systems turn
out to be of the form of a star set.\\
For example,  the complete system
 $M=\{x^hy,\, h \geq 1\}\subseteq k[x,y]$
 of Example \ref{ComplInf} is not the star set of the ideal $J:=(M)$.\\
Indeed,
$\cN  (J)=\{x^m,\, m\geq 0\}\cup \{y^l,\, l>0\}$ and
all the terms of the form $xy^k,$ $k>1$,
do not belong to $M$, even if
$\frac{xy^k}{\min(xy^k)}=y^k\in \cN  (M).$
\\
Moreover, for $h>1$, $\frac{x^hy}{x}=x^{h-1}y\in M$, so $x^hy\notin \mathcal{F}(J)$.
\end{enumerate}
\end{remark}

Better results hold if the monomial ideal
 $J$ satisfies one of the following
 conditions, weaker then the strongly stable property (see section \ref{MFam}).

\begin{definition}\label{Qstab}
A monomial ideal $J$ is called
 \emph{stable} if it holds
$$\tau \in J, \ x_j >\min(\tau) \Longrightarrow \frac{x_j\tau}{\min(\tau)}\in J$$

A monomial ideal $J$ is called \emph{quasi stable} if it holds
$$\tau \in J, \ x_j >\min(\tau) \Longrightarrow \exists t \geq 0  : \ \frac{x_j^t \tau}{\min(\tau)}\in J.$$
\end{definition}
We will show that this notion of quasi stable ideal coincides with the one in \cite{S1}, by proving that $J$ actually has a Pommaret basis.
\begin{remark}\label{StrongComplF}

\begin{itemize}
\item Obviously, a stably complete system  $M$  is also  stable, and a stable set is also quasi stable.

 \item In order to verify whether the conditions
above are satisfied for a given ideal $J$ it is
sufficient to check the terms in the basis $\cG  (J)$.
\end{itemize}
\end{remark}

\begin{proposition}\label{QstabFugualG}
Let  $J$ be a monomial ideal.
Then TFAE:
\begin{itemize}
 \item[i)]  $J$   is stable
 \item[ii)] $\mathcal{F}(J)=\cG  (J)$
\end{itemize}

\end{proposition}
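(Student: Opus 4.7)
The plan is to prove the two implications separately, after recording the easy inclusion $\cG(J)\subseteq \mathcal F(J)$: if $\sigma \in \cG(J)$, minimality forces $\sigma/\min(\sigma) \notin J$, so $\sigma \in \mathcal F(J)$. Hence the equality $\mathcal F(J)=\cG(J)$ is equivalent to $\mathcal F(J)\subseteq \cG(J)$.

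For (i)$\Rightarrow$(ii), assuming $J$ stable, I would take $\tau \in \mathcal F(J)$ and argue by contradiction that $\tau \in \cG(J)$. If $\tau$ were not a minimal generator, some variable $x_j\mid \tau$ would satisfy $\tau/x_j \in J$. The case $x_j=\min(\tau)$ immediately contradicts $\tau/\min(\tau)\in \cN(J)$. If instead $x_j>\min(\tau)$, I would note that $\min(\tau/x_j)=\min(\tau)$ (dividing by a variable strictly greater than $\min(\tau)$ leaves $\min(\tau)$ in the support and does not introduce smaller variables), so stability applied to $\tau/x_j \in J$ with $x_j>\min(\tau/x_j)$ yields
\[
\frac{x_j(\tau/x_j)}{\min(\tau/x_j)}=\frac{\tau}{\min(\tau)} \in J,
\]
contradicting $\tau \in \mathcal F(J)$.

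For (ii)$\Rightarrow$(i), I would argue the contrapositive: if $J$ is not stable, exhibit an explicit element of $\mathcal F(J)\setminus \cG(J)$. First, I reduce the failure of stability to a minimal generator: start with any witness $\tau \in J$, $x_j>\min(\tau)$, $x_j\tau/\min(\tau)\notin J$, and decompose $\tau=\sigma\rho$ with $\sigma \in \cG(J)$. Since $\sigma\mid\tau$, one has $\min(\sigma)\geq \min(\tau)$; a strict inequality would force $\min(\tau)\mid\rho$ and hence $x_j\tau/\min(\tau)=\sigma\cdot (x_j\rho/\min(\tau))\in J$, a contradiction. Thus $\min(\sigma)=\min(\tau)=:x_k$, and if one had $x_j\sigma/x_k\in J$ then $x_j\tau/x_k=(x_j\sigma/x_k)\rho\in J$, again a contradiction; so stability fails at the minimal generator $\sigma$ itself. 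Setting $\gamma:=x_j\sigma$, I would check that $\gamma \in J$, that $\min(\gamma)=x_k$ (as $x_j>x_k$ and $x_k\mid \sigma$), and that $\gamma/\min(\gamma)=x_j\sigma/x_k\notin J$ by construction, so $\gamma \in \mathcal F(J)$; on the other hand $\sigma$ divides $\gamma$ properly, so $\gamma \notin \cG(J)$.

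The only step that is not purely bookkeeping is the reduction, inside (ii)$\Rightarrow$(i), of a generic witness of non-stability to a witness at a minimal generator; the rest is keeping track of $\min(\cdot)$ under the operations $\tau\mapsto \tau/x_j$ and $\sigma\mapsto x_j\sigma$, where multiplication or division by a variable strictly greater than $\min(\tau)$ leaves the set of variables below $x_j$ — and in particular $\min(\tau)$ — unaffected.
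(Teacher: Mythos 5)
Your proof is correct. The direction (i)$\Rightarrow$(ii) is essentially the paper's argument in contrapositive clothing: the paper takes $\gamma\in J\setminus \cG(J)$, writes $\gamma=\tau\sigma$ with $\tau\in\cG(J)$, and splits on whether $\min(\gamma)$ divides $\sigma$, applying stability once in the nontrivial case; you instead pass to a single-variable predecessor $\tau/x_j\in J$ and split on whether $x_j=\min(\tau)$ — the same one application of stability, just packaged differently. The direction (ii)$\Rightarrow$(i) is where you genuinely diverge. The paper deduces from $\mathcal F(J)=\cG(J)$, via Theorem \ref{moltiplicative}, that $\cG(J)$ is stably complete, and then uses completeness (the offspring decomposition of $x_jx^\alpha$) together with Remark \ref{StrongComplF} to get $x_jx^\alpha/\min(x^\alpha)\in J$. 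You instead argue the contrapositive entirely from the definitions: you first show that any failure of stability can be pushed down to a minimal generator $\sigma\in\cG(J)$ (which is, in effect, a proof of the part of Remark \ref{StrongComplF} that the paper only asserts), and then exhibit the explicit term $\gamma=x_j\sigma\in\mathcal F(J)\setminus\cG(J)$. Your route is more elementary and self-contained — it does not rely on the uniqueness/stable-completeness machinery of Theorem \ref{moltiplicative} — and it produces a concrete witness of degree $\deg(\sigma)+1$ separating $\mathcal F(J)$ from $\cG(J)$; the paper's route is shorter given that Theorem \ref{moltiplicative} has already been established and keeps the proposition tied to the completeness formalism that the rest of the paper runs on.
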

\begin{proof}
{\emph{ i) $\Rightarrow$ ii)}} The inclusion
$\cG  (J)\subseteq \mathcal{F}(J)$
 is true for every  monomial ideal by definition of star set.
We prove now that
 $\gamma\notin \mathcal{F}(J)$ for every term $\gamma\in J\setminus \cG  (J)$. \\
By hypothesis, $\exists \tau \in \cG  (J)$,
 such that $\gamma=\tau \sigma $ and $\sigma \neq 1$.\\
Let $x_k:=\min( \gamma)$.
 If $x_k \vert \sigma$,
then$\frac{\gamma}{\min(\gamma)}=\tau\frac{\sigma}{x_k}\in J$, so that $\gamma \notin
 \mathcal{F}(J)$.

 If, on the other hand, $x_k \not\vert \sigma$ and  $x_j$
is any variable dividing $\sigma$,
then $x_j>x_k$ and $x_k=\min(\tau) $.
By the
 stability of $J$ we have
 $\frac{x_j\tau}{x_k}\in J$, hence
$\frac{\gamma }{x_k}=\frac{\tau \sigma}{x_j}\frac{x_j}{x_k} \in J$,
 hence again $\gamma \notin \mathcal{F}(J)$.

 \medskip

 {\emph{ ii) $\Rightarrow$ i)}} 
 If \emph {ii)} holds,
 then   $\cG  (J)$ is the only stably complete system generating $J$.
  By remark \ref{StrongComplF},
 we can check the stability on the terms $x^\alpha \in \cG  (J)$.
Let $x_j> x_k:=min(x^\alpha)$.
  By hypothesis there exists $x^\beta \in \cG  (J)$ such
that $x_jx^\alpha \in \off_{\cG  (J)}(x^\beta)$, and, since $x^\alpha \in \cG  (J)$,  of
course $x^\alpha x_j\notin \cG  (J)$. Hence
  $x^\beta \vert \frac{x_jx^\alpha}{x_k} $ and so
 $\frac{x^\alpha x_j}{x_k} \in J$.
\end{proof}

\begin{proposition}\label{Pocostabile}
Let  $J$ be a monomial ideal.
Then TFAE:
\begin{itemize}
 \item[i)]  $J$   is  quasi stable
 \item[ii)] $\vert \mathcal{F}(J) \vert <\infty$
\item[iii)]    $\mathcal{F}(J)=\mathcal{H}(J)$ is the Pommaret basis of $J$.
\end{itemize}
\end{proposition}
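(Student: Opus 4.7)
\smallskip
\noindent\textbf{Proof plan.}
The plan is to handle $(ii)\Leftrightarrow(iii)$ by definition, $(iii)\Rightarrow(i)$ via a chain argument exploiting completeness, and $(i)\Rightarrow(ii)$ via a direct bound on the exponents of elements of $\mathcal{F}(J)$ using quasi stability.

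For $(ii)\Leftrightarrow(iii)$ I would simply observe that Theorem \ref{moltiplicative} identifies $\mathcal{F}(J)$ as the unique stably complete generating set of $J$, while Definition \ref{Complete} declares a finite stably complete set to be precisely the Pommaret basis $\mathcal{H}(J)$. Hence $|\mathcal{F}(J)|<\infty$ is tautologically the same as $\mathcal{F}(J)=\mathcal{H}(J)$.

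For $(iii)\Rightarrow(i)$, by Remark \ref{StrongComplF} it suffices to verify the quasi stability condition on generators $\tau\in\cG(J)\subseteq\mathcal{F}(J)$. Fix $\min(\tau)=x_k$ and $x_j>x_k$. Starting from $\tau_0:=\tau$, as long as $\min(\tau_i)<x_j$ the variable $x_j$ is non-multiplicative for $\tau_i$ in the stably complete sense, so completeness produces a successor $\tau_{i+1}\in\mathcal{F}(J)$ with $x_j\tau_i=\tau_{i+1}\mu_{i+1}$, and Lemma \ref{DivMagLex} forces $\tau_i<_{Lex}\tau_{i+1}$. The finiteness of $\mathcal{F}(J)$ then guarantees termination at some $\tau_r$ with $\min(\tau_r)\geq x_j$. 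Telescoping the $r$ equalities yields $x_j^r\tau=\tau_r\mu_1\cdots\mu_r$; since $x_k$ divides $\tau$ but not $\tau_r$ (because $\min(\tau_r)\geq x_j>x_k$), the factor $x_k$ must appear in $\mu_1\cdots\mu_r$, giving $x_j^r\tau/x_k\in J$ as required.

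For $(i)\Rightarrow(ii)$, I would partition $\mathcal{F}(J)=\bigsqcup_k\mathcal{F}(J)_k$ according to the minimum variable, and show each piece is finite. Two preliminary observations reduce the problem to an exponent bound. First, for $\gamma\in\mathcal{F}(J)_k$ there must exist a generator $\tau\in\cG(J)$ dividing $\gamma$ with $\min(\tau)=x_k$: otherwise every dividing generator avoids $x_k$, so one of them already divides $\gamma/x_k$, forcing $\gamma/x_k\in J$ and contradicting $\gamma\in\mathcal{F}(J)$. Second, such a $\tau$ must satisfy $\deg_k(\tau)=\deg_k(\gamma)$, for otherwise $\tau\mid\gamma/x_k$. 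Now for each $j>k$, quasi stability supplies a minimal integer $t_{\tau,j}$ with $x_j^{t_{\tau,j}}\tau/x_k\in J$, and a direct exponent comparison shows $\deg_j(\gamma)<t_{\tau,j}+\deg_j(\tau)$. Since $\cG(J)$ is finite, these quantities are uniformly bounded over the finitely many generators with $\min=x_k$, which bounds all exponents of $\gamma\in\mathcal{F}(J)_k$ and hence makes $\mathcal{F}(J)_k$ finite; assembling over $k$ yields $|\mathcal{F}(J)|<\infty$. The main obstacle is precisely this last exponent bound, where one must verify that the chosen multiple of $x_j^{t_{\tau,j}}\tau/x_k$ actually divides $\gamma/x_k$, and this hinges on the less obvious identity $\deg_k(\tau)=\deg_k(\gamma)$ established in the second preliminary observation.
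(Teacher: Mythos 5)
Your proposal is correct and follows the same overall skeleton as the paper's proof (the cycle \emph{i)} $\Rightarrow$ \emph{ii)} $\Rightarrow$ \emph{iii)} $\Rightarrow$ \emph{i)}, with \emph{ii)} $\Leftrightarrow$ \emph{iii)} immediate from Theorem~\ref{moltiplicative} and Definition~\ref{Complete}), but two of the three implications are executed by genuinely different mechanisms. For \emph{iii)} $\Rightarrow$ \emph{i)} the paper picks a single exponent $m\gg 0$ with $x_j^m x^\alpha\notin\mathcal F(J)$ and reads off the divisibility $x^\beta\mid x_j^m x^\alpha/x_k$ directly from the offspring decomposition (a step that itself requires a small extra argument to see that $x_k$ ends up in the multiplicative cofactor); your telescoping chain $x_j\tau_i=\tau_{i+1}\mu_{i+1}$, strictly increasing for $<_{Lex}$ by Lemma~\ref{DivMagLex} and terminating by finiteness of $\mathcal F(J)$, reaches the same conclusion one step at a time and makes the bookkeeping of the factor $x_k$ transparent. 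For \emph{i)} $\Rightarrow$ \emph{ii)} the paper bounds the \emph{total degree} of every element of $\mathcal F(J)$ by $a+tn$ using a uniform quasi-stability exponent $t$; you instead bound each exponent $\deg_j(\gamma)$ separately, after the correct (and slightly delicate) preliminary observations that some $\tau\in\cG(J)$ with $\min(\tau)=\min(\gamma)=x_k$ divides $\gamma$ and that necessarily $\deg_k(\tau)=\deg_k(\gamma)$, without which the divisibility $x_j^{t_{\tau,j}}\tau/x_k\mid\gamma/x_k$ would fail. Your version is longer but yields sharper, variable-by-variable bounds on $\mathcal F(J)$; the paper's degree bound is cruder but quicker. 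Both arguments are complete and correct.
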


\begin{proof}
{\emph{ i) $\Rightarrow$ ii)}} Let $a$ be the maximum
of the degrees of elements in $ \cG  (J)$ and let $t$ be such that
$\frac{x_j^t x^\alpha}{\min (x^\alpha)} \in J$  for every $x^\alpha \in  \cG  (J)$ and
$x_j>min(x^\alpha)$. We prove that $\mathcal{F}(J)$ is
contained in $P_{< d}$ where  $d:=a+tn$.
 Let $x^\alpha x^\eta \in J_{\geq d}$ with $x^\alpha \in \cG  (J)$ and  $x_k$ be $\min(x^{\alpha}x^\eta)$.
If $x_k\vert x^\eta$, then  obviously
 $\frac{ x^\alpha x^\eta}{x_k}= x^\alpha\frac{  x^\eta}{x_k} \in J$, so $x^{\alpha}x^\eta \notin \mathcal{F}(J)$. If, on the other hand,  $x_k\not\vert x^\eta$,
 then $x_k=\min(x^\alpha)$.
 Moreover, every variable dividing $x^\eta$ is higher
than $x_k$ and at least one of them , let us call it $x_j$,
 appears in $x^\eta$ with exponent $\geq t$,
 as $\deg(x^\eta)\geq nt$. Then $\frac{ x_j^t x^\alpha }{x_k}\in J$,
 hence $ \frac{ x^\alpha x^\eta }{x_k}=\frac{ x_j^t x^\alpha }{x_k}\cdot \frac{ x^\eta }{x_j^t} \in J$ and
 $x^\alpha x^\eta \notin \mathcal{F}(J)$.

 \medskip
 {\emph{ ii) $\Rightarrow$ iii)}} 
 By \emph{ii)} $ \mathcal{F}(J)$ is finite, and by 
 \ref{moltiplicative} is stably complete, so it is clearly the Pommaret basis of $J$.\\

  {\emph{ iii) $\Rightarrow$ i)}} By remark \ref{StrongComplF}, we check the quasi stability on the terms $x^\alpha \in \cG  (J)$.
Let $x_j> x_k:=min(x^\alpha)$.
  By the hypothesis on the finiteness of $\mathcal{F}(J)$, there exists $m\gg 0$ such that $x^\alpha x_j^m \notin \mathcal{F}(J)$. Moreover, being $\mathcal{F}(J)$
  a stably
 complete system, there exists
  $x^\beta \in \mathcal{F}(J)$ such that $x_j^m x^\alpha \in \off_{\mathcal{F}(J)} (x^\beta)$ and
  $x^\beta \vert \frac{x_j^m x^\alpha}{x_k} $.
Therefore,  $\frac{x_j^m x^\alpha}{x_k} \in J$,
namely $J$ is quasi stable.
\end{proof}

\begin{example}
In $k[x,y,z]$ with $x<y<z$:
\begin{itemize}
\item considered $J=(z,y^2) $,  we get $M=\mathcal{F}(J)=\cG  (J)=\{ z, y^2 \} $, since $J$ is  stable;
 \item taken the ideal $J'=(z^2,y)$, we get  $M=\mathcal{F}(J)= \{  z^2  , yz, y \}\supset \cG  (J) $.\\
 In fact, $J$ is quasi stable, but it is not  stable;
\item given $J=(y)$, the star set is $M=\mathcal{F}(J)=\{ z^k y \ \vert \ k\geq 0 \}$, and it holds $\vert \mathcal{F}(J) \vert = \infty$, since $J$ is not stable.
\end{itemize}
\end{example}

\section{ $M$-marked sets  and  reduction process.}\label{sec:reductiont}

In this section,
 we generalize
the notions of $J$-marked
 polynomial, $J$-marked
 basis and $J$-marked
family given in
\cite{BCLR, CR}
 for $J$ strongly
stable. \\
In those papers,
 the involved polynomials
 are marked on the monomial
 basis of the given monomial ideal $J$. 
Here, we give the analogous
 definitions for any
 monomial ideal, provided
 that the involved
polynomials are marked
on a complete generating
 system in the sense
of definition
\ref{Complete}.\\
After determining
the setting, we extend
to it the reduction
process of the
quoted papers.\\
At the end, we will
 see that such
a generalized
procedure does not need to be 
noetherian for every
complete system of terms.
We will need to add
 some hypotheses on
the given complete
system in order to
overcome this problem.
\\
We point out that, as in \cite{BCLR, CR}, we do not introduce any term-ordering and this represents an important difference w.r.t. Janet's papers.
\\
Moreover, we consider polynomials with coefficients in a ring, not necessarily in a field. 

\begin{definition}\label{MMB}
Let $M$ be a complete system of terms and $J$
be the ideal it generates.
\begin{itemize}
\item
A $M$-\emph{marked set}
is a finite set $\mathcal{G}$ of
homogeneous (monic) marked
polynomials $f_\alpha=x^\alpha-\sum c_{\alpha\gamma} x^\gamma$,
with  $\Ht(f_\alpha)=x^\alpha \in M$ and
   $\supp (f_{\alpha}-x^{\alpha})\subset \cN  (J)$,
 so that  $\vert \supp (f)\cap J \vert =1$.
\item  A $M$-\emph{marked basis}  $\mathcal{G}$
is a  $M$-marked set  such that   $\cN  (J)$
 is a basis of $P/(\mathcal{G})$ as $A$-module, i.e.
 $P=(\mathcal{G})\oplus \langle \cN  (J) \rangle$
 as an $A$-module.
\item  The   $M$-\emph{marked family}
$\Mf(M)$ is the set of all homogeneous
 ideals $I$  that are generated by a $M$-marked basis.
\end{itemize}
\end{definition}

\begin{remark}
 Observe that the above definition of marked family  $\Mf(M)$ is consistent with that given in the Introduction of $\Mf (J)$ for a monomial ideal $J$.
 Indeed, if $I\in \Mf(M)$, then $I\in \Mf(J)$ with $J=(M)$. On the other hand, for every given $J$  there are  complete systems $M$ that   generate it, 
 for instance  $M=\mathcal F(J)$ and $\Mf(J)=\Mf(M)$. In fact, if   $I\in \Mf(J)$, every  polynomial $h$ 
 can be uniquely  written as a sum $ f+g$ with $f\in I$ and $g \in  \langle \cN (J) \rangle $; especially for every $x^\alpha \in M$, we have
\begin{equation}\label{EqnF}
 x^\alpha=f_\alpha+ g_\alpha, \; f_\alpha \in I \textrm{ and } g_\alpha \in \langle \cN (J) \rangle.
\end{equation}
Then $I$  contains  
 the $M$-marked basis $$\mathcal G=\{ f_\alpha=x^\alpha -g_\alpha \ , \ x^\alpha \in \mathcal M\}.$$ Furthermore $\mathcal G$ is a $M$-marked basis since
 $(\mathcal G)\subseteq I$ and $P=(\mathcal G) + \langle \cN  (J) \rangle=I\oplus  \langle \cN  (J) \rangle$.
 
 The only difference between the two notations $\Mf(J)$ and $\Mf(M)$ with $M$ a complete system generating $J$, is that using  the second one we present every 
 ideal of the family by means of a special set of generators depending on $M$. Note that, by the definition itself of $\Mf(J)$,
 we can assert that for every ideal $I \in \Mf(J)$ the $M$-marked basis generating it is \emph{unique}. 
\end{remark}

We define now a reduction
procedure for terms and
polynomials, with respect
 to an homogeneous set
$\mathcal G$ of polynomials,
 marked on a complete
system of terms $M$.
\\
The usual reduction
process with respect to
 $\mathcal G$ consists
of substituting each
term $x^\alpha x^\eta$,
multiple of an head term $x^\alpha=\Ht(f_\alpha)$,
 with the polynomial
$(x^\alpha-f_\alpha)x^\eta=g_\alpha x^\eta$.
\\
We add an extra
condition to the
standard procedure,
namely that this
substitution can be
performed only in the
case
$x^\alpha x^\eta=x^\alpha \ast_M x^\eta$.

\begin{definition}\label{def: riduzione}
Let  $M$ be a complete system and   $\mathcal G$ a $M$-marked set.
 We will denote by   $ \xrightarrow{\mathcal G} $ the
transitive closure of the 
relation $h  \xrightarrow{\mathcal G}
   h-c f_{\alpha}x^{\eta} $,
  where $x^\alpha x^\eta=x^\alpha \ast_M x^\eta$ is a term
that appears in $h$ with a non-zero coefficient $c$.
  We will say that  $ \xrightarrow{\mathcal G} $ is
noetherian if the length $r$ of any sequence
  $h=h_0    \xrightarrow{\mathcal G} h_1 \xrightarrow{\mathcal G} \dots \xrightarrow{\mathcal G} h_r$  is bounded
by an integer number $m=m(h)$. This
  is equivalent to say that if we continue
rewriting terms in this way we always
obtain, after a finite number of reductions,
a polynomial whose support
   is contained in $ \cN  (J)$.

  We will write $h  \xrightarrow{\mathcal G}_\ast
  g $ if  $h  \xrightarrow{\mathcal G} g $ and $\supp (g)\subset  \cN  (J)$.
\end{definition}

In general, the relation $\xrightarrow {\mathcal G}$ is
not noetherian, namely there are
 sequences of reduction  of infinite length.

\begin{example}
Let $M:=\{ xz,yz,y^2\}$ a set of terms in $k[x,y,z]$ with $x<y<z$.
We find the following sets of multiplicative variables:
\begin{itemize}
\item $\mult_M(xz)=\{x,z\}$
\item $\mult_M(y^2)=\{x,y \}$
\item $\mult_M(yz)=\{x,y,z\}$
\end{itemize}
and check that $M$ is complete. \\ Let $\mathcal G$ the $M$-marked set $\{f_{xz}=xz-xy, \ f_{yz}=yz-z^2,\ f_{y^2}=y^2\}$.\\
Then we have the infinite sequence of reductions:
$$xz^2=xz\ast_M z \xrightarrow{G} xz^2-f_{xz}z=xyz=yz\ast_M x \xrightarrow{\mathcal G} xyz-f_{yz}x= xz^2 $$
\end{example}

However, the reduction  $\xrightarrow {\mathcal G}$ is always noetherian if ${\mathcal G}$ is marked on a stably complete system. In order to prove
this fact we will use the following special subset of the ideal $({\mathcal G})$.

\begin{definition} Let $\mathcal G$ be a
$M$-marked set on a   complete system of
terms $M$ and let $J:=(M)$.
For each degree $s$,
 we will denote by $\mathcal G^{(s)}$
 the set of homogeneous
 polynomial
$$\mathcal G^{(s)}:=\{  f_\alpha x^\eta \ \vert \ x^\alpha \ast_{M} x^\eta \in (M)_s \}$$
marked on the terms of $J_s$ in the natural way
$\Ht ( f_\alpha x^\eta)=x^\alpha  x^\eta$.
\end{definition}

\begin{remark}\label{remark:minore} 
Observe that if
  $\mathcal G$ is a
$M$-marked set on a  stably complete system of terms $M$, for every homogeneous polynomial $g$ of degree $s$, $g \xrightarrow {\mathcal G} h$ implies that 
 $g-h= \sum _{i=1}^m c_i f_{\alpha_i} x^{\eta_i}\in \langle \mathcal G^{(s)}\rangle$.
 
It is worth noticing as a direct consequence
of  Lemma \ref{lem: minorelex}  that if $ f_\alpha x^\eta\in \mathcal G$, then
  every term in
$\supp( x^{\alpha} x^{\eta}- f_\alpha  x^\eta)$
either belongs to $\cN  ((M))$ or is of the type
 $x^{\alpha'} \ast_M x^{\eta'}$ with $x^{\eta'} <_{Lex} x^{\eta}$.
 \end{remark}

\begin{lemma}\label{lemma: pro ss} Let $\mathcal G$ be a
 $M$-marked set on the  stably  complete system of terms
$M=\mathcal F(J)$. \begin{enumerate}
 \item Every term in
$\supp( x^{\beta} x^{\epsilon}- f_\beta x^\epsilon )$
either belongs to $\cN  ((M))$ or is of the type
 $x^{\alpha} \ast_M x^{\eta}$ with $x^{\eta} <_{Lex} x^{\epsilon}$.
 \item  If $f_\beta \in \mathcal F (J)$,  then  all the polynomials $ f_{\alpha_i} x^{\eta_i}\in \mathcal G^{(s)} $
 used in the reduction  of $ x^\beta x^\epsilon$ (except $ f_\beta x^\epsilon$ if it belongs to $\mathcal G^{(s)}$)  are such that  $x^\epsilon  >_{Lex} x^{\eta_i}$.
  \item If $g= \sum _{i=1}^m c_i f_{\alpha_i} x^{\eta_i}$,
with $c_i\in k-\{0 \} $ and $ f_{\alpha_i} x^{\eta_i}\in \mathcal G^{(s)} $
pairwise different,
then $g\neq 0$ and  its support contains
 some term of the ideal  $J$.
                   \end{enumerate}
\end{lemma}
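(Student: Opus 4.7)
I would prove the three parts in order, using (1) as the Lex-decreasing engine driving (2) and (3).

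\emph{Proof of (1).} Expand $x^\beta x^\epsilon - f_\beta x^\epsilon = \sum_\gamma c_\gamma\, x^\gamma x^\epsilon$, where every $x^\gamma \in \cN(J)$ because $\supp(f_\beta - x^\beta) \subseteq \cN(J)$. If $x^\gamma x^\epsilon \in \cN(J)$ there is nothing to prove; otherwise let $x^{\alpha'} \ast_M x^{\eta'}$ be its star decomposition and apply Lemma~\ref{lem: minorelex} with $\sigma := x^\gamma \notin \cT(M)$ and second factor $x^\epsilon$, to conclude $x^{\eta'} <_{Lex} x^\epsilon$.

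\emph{Proof of (2).} The first rewriting of $x^\beta x^\epsilon$ rests on its star decomposition. If this is $x^\beta \ast_M x^\epsilon$, the polynomial used is $f_\beta x^\epsilon$, the permitted exception. Otherwise it is $x^{\alpha_1} \ast_M x^{\eta_1}$ with $x^{\alpha_1} \neq x^\beta$, and I would prove $x^{\eta_1} <_{Lex} x^\epsilon$ by choosing the largest $x_j$ with $\deg_j(x^{\eta_1}) \neq \deg_j(x^\epsilon)$: then $\deg_k(x^{\alpha_1}) = \deg_k(x^\beta)$ for all $k > j$ and $\deg_j(x^{\alpha_1}) < \deg_j(x^\beta)$, so that $x^\beta \in M$ witnesses $x_j \notin \mult_M(x^{\alpha_1})$, contradicting $x_j \mid x^{\eta_1}$. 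Every subsequent rewriting then uses some $f_{\alpha_i} x^{\eta_i}$ whose $x^{\eta_i}$ is strictly Lex-smaller than the preceding one, by Remark~\ref{remark:minore} (equivalently, part (1) applied to the step just performed), and iterating keeps all future $x^{\eta_i}$ strictly below $x^\epsilon$.

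\emph{Proof of (3).} Pick $x^{\eta^*}$ a $<_{Lex}$-maximum among the $x^{\eta_i}$. For each $i$ with $x^{\eta_i} = x^{\eta^*}$, the head term $x^{\alpha_i} x^{\eta^*} = x^{\alpha_i} \ast_M x^{\eta^*} \in J$ must survive in $g$: head terms of other summands with $x^{\eta_j} = x^{\eta^*}$ are distinct from it by uniqueness of the star decomposition combined with pairwise distinctness of the $f_{\alpha_i} x^{\eta_i}$, while any non-head contribution from any summand has, by part (1), Lex-second factor strictly below $x^{\eta_j} \leq_{Lex} x^{\eta^*}$ and so cannot equal $x^{\alpha_i} \ast_M x^{\eta^*}$. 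Hence $x^{\alpha_i} x^{\eta^*}$ appears in $g$ with coefficient $c_i \neq 0$, giving $g \neq 0$ with support intersecting $J$.

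\textbf{Main obstacle.} The delicate point is the very first rewriting step in (2): Lemma~\ref{lem: minorelex} cannot be applied directly because the competing factorization $x^\beta \cdot x^\epsilon$ has $x^\beta \in \cT(M)$. Stable completeness must be invoked by hand through the definition of multiplicative variable, with $x^\beta$ itself exhibited as the witness that produces the contradiction.
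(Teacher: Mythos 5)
Your proof is correct and follows essentially the same route as the paper's: part (1) from Lemma \ref{lem: minorelex}, part (2) by Lex-induction on the second factor of the star decomposition, and part (3) by isolating a Lex-maximal $x^{\eta_i}$ and checking that its head term survives via uniqueness of the star decomposition plus Lemma \ref{lem: minorelex}. The only difference is that you spell out the first-step inequality $x^{\eta_1} <_{Lex} x^{\epsilon}$ directly from the definition of multiplicative variable (the same argument used in the proof of Theorem \ref{DISG}), whereas the paper merely asserts $x^{\eta} \leq_{Lex} x^{\epsilon}$ at that point.
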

\begin{proof}
 (1) is a direct consequence of Lemma \ref{lem: minorelex}.
 
 (2) Assume that the statement holds for every  term $x^{\beta'} x^{\epsilon'}$, with $x^{\epsilon'} <_{Lex} x^{\epsilon}$.
 At a first step of reduction of $x^{\beta}x^{\epsilon}$ we use the polynomial
 $ f_{\alpha}x^{\eta}$ where $ x^\beta x^\epsilon=x^{\alpha} *_M x^{\eta}$, so that
 $x^{\eta}\leq _{Lex}x^\epsilon$;  moreover every term in the support of the obtained polynomial either belongs to $\cN  ((M))$ or is of the type
 $x^{\alpha'} \ast_M x^{\eta'}$ with $x^{\eta'} <_{Lex} x^{\eta}$ (Remark \ref{remark:minore}).  Then we conclude since we assumed the property 
 holds for all those terms.

(3)  
 We assume that the summands in $g$ are ordered
so that $x^{\eta_1} \geq_{Lex} x^{\eta_i}$ for
every $i=1, \dots, m$ and
 show that $x^{\eta_1+\alpha_1}$ belongs to
 the support of $g$.

 The term $x^{\alpha_1+\eta_1}$  cannot appear
 as the head of   $f_{\alpha_i}x^{\eta_i} $ for
some $i\neq 1$ because the star decomposition of
a term is unique.
 Moreover it cannot appear in $f_{\alpha_i}x^{\eta_i} -x^{\alpha_i+\eta_i} $
 since $x^{\alpha_1 + \eta_1}=x^\beta x^{\eta_i}$, with $x^\beta \in \cN(J)$
would imply
 $x^{\eta_i}>_{Lex}x^{\eta_1}$ (see Lemma \ref{lem: minorelex}),
 against the assumption.
\end{proof}

\begin{theorem} \label{RiduSC}

Let $\mathcal G$ be a $M$-marked set
on a stably complete system of terms $M$ and let
$J$ be the ideal generated by $M$.

Then the reduction process $\xrightarrow{\mathcal G}$ is
 noetherian and, for every integer $s$,
  $P_s=\langle  \mathcal G^{(s)} \rangle \oplus \langle \cN  (J)_s \rangle$.
Indeed, for every $h\in P_s$
$$h=f+g \hbox{  with } f\in \langle  \mathcal G^{(s)} \rangle \hbox{  and  }
 g\in  \langle \cN  (J)_s \rangle  \Longleftrightarrow  h \xrightarrow{\mathcal G}_\ast g \hbox{ and }
   f=h-g $$
\end{theorem}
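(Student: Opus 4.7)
The plan is to prove the three assertions in sequence: noetherianity of $\xrightarrow{\mathcal G}$, then the direct-sum decomposition $P_s=\langle\mathcal G^{(s)}\rangle\oplus\langle\cN(J)_s\rangle$, and finally the explicit identification of the two components through the reduction. All three steps will draw on the three items of Lemma \ref{lemma: pro ss}, whose hypothesis (stable completeness of $M$) is precisely what makes the argument work.

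For noetherianity, to each homogeneous $h\in P_s$ I would associate the finite multiset
$$W(h)=\{\,x^\eta \ :\ x^\beta\in\supp(h)\cap J_s,\ x^\beta=x^\alpha\ast_M x^\eta\,\}$$
of lex-weights produced by the star decomposition of the terms of $\supp(h)$ lying in $J$. A single reduction step $h\xrightarrow{\mathcal G} h-c\,f_\alpha x^\eta$ kills the term $x^\alpha x^\eta$ (whose weight is exactly $x^\eta$) and can only introduce new terms either in $\cN(J)_s$ or of the form $x^{\alpha'}\ast_M x^{\eta'}$ with $x^{\eta'}<_{Lex} x^\eta$, by Lemma \ref{lemma: pro ss}(1). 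Hence $W(h')$ is obtained from $W(h)$ by removing $x^\eta$ and inserting finitely many elements strictly smaller than $x^\eta$ in $<_{Lex}$ (any cancellations can only shrink $W$ further). Since $\mathcal T_s$ is finite, $<_{Lex}$ is a well-order on it, so the Dershowitz--Manna multiset extension is well-founded and every reduction chain must terminate.

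Noetherianity immediately yields $P_s=\langle\mathcal G^{(s)}\rangle+\langle\cN(J)_s\rangle$: for any $h\in P_s$ a terminating reduction $h\xrightarrow{\mathcal G}_\ast g$ gives $g\in\langle\cN(J)_s\rangle$, and by construction $h-g$ is an $A$-linear combination of elements of $\mathcal G^{(s)}$. Directness is then exactly the content of Lemma \ref{lemma: pro ss}(3): any nonzero element of $\langle\mathcal G^{(s)}\rangle$ has at least one term of $J_s$ in its support and therefore cannot lie in $\langle\cN(J)_s\rangle$, so the intersection is zero. The equivalence in the theorem is then forced by uniqueness of the direct-sum decomposition: given $h\in P_s$, if $h\xrightarrow{\mathcal G}_\ast g$ and $f=h-g$, then $f\in\langle\mathcal G^{(s)}\rangle$ and $g\in\langle\cN(J)_s\rangle$ as just observed; conversely, if $h=f+g$ is any such decomposition, then any complete reduction $h\xrightarrow{\mathcal G}_\ast g'$ produces a second decomposition $h=(h-g')+g'$, and directness forces $g'=g$ and $f=h-g$.

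The main obstacle is the noetherianity step. The reduction rule is nondeterministic and, because we work in a ring and polynomials may cancel, new terms of $J$ can appear in the support after a reduction step; a naive "largest term strictly decreases" invariant is therefore insufficient. Stable completeness of $M$ enters precisely through Lemma \ref{lem: minorelex}, which guarantees that every newly created $J$-term carries a strictly smaller lex-weight than the one just eliminated. This is exactly the property that failed in the infinite-reduction counterexample preceding the statement, and it is what legitimizes $W(h)$ as a well-founded termination measure.
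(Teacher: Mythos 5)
Your proof is correct and follows essentially the same route as the paper: both arguments rest on Lemma \ref{lem: minorelex} (via Lemma \ref{lemma: pro ss}(1)) to show that each reduction step replaces a $J$-term of star weight $x^\eta$ by terms of strictly $<_{Lex}$-smaller weight, and on Lemma \ref{lemma: pro ss}(3) for the directness of the sum. The only differences are organizational and in rigor: you make the paper's terse ``conclude by induction'' precise with an explicit multiset termination measure, and you derive the sum decomposition $P_s=\langle\mathcal G^{(s)}\rangle+\langle\cN(J)_s\rangle$ from noetherianity, whereas the paper proves it by a separate lex induction on $x^\eta$.
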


\begin{proof}
Let
$\mathcal G=\{f_\alpha \ \vert \ x^\alpha \in M \}$.
\\
We observe that we have
$\langle  \mathcal G^{(s)} \rangle \cap  \langle \cN  (J)_s \rangle =\{0\}$
 by Lemma \ref{lemma: pro ss}.

In order to prove that
the module $\langle  \mathcal G^{(s)} \rangle +  \langle \cN  (J)_s \rangle$ coincides with $P_s$ it
is sufficient to show that
it contains all the  terms in $J_s\setminus M$,
being obvious for those in $M$, for which $x^\alpha =f_\alpha+g_\alpha$ (see \ref{EqnF}).

Let $\tau $ be a term in $J_s$.\\
If $\tau=x^\alpha\ast_M x^\eta$, we may assume
of having already proved the statement for  all the terms
$\tau'=x^{\alpha'} \ast_M x^{\eta'}$
with $x^{\eta'} <_{Lex} x^\eta$.

We have $x^\alpha x^\eta=f_\alpha x^\eta  +(x^\alpha - f_\alpha)x^\eta$ where $\supp (x^\alpha - f_\alpha)\subset \cN  (J)$.
If $x^\beta$ is any term
in this support, then either
$x^{\beta+\eta}\in \cN  (J)$ or $x^{\beta+\eta}=x^{\alpha'} \ast_M x^{\eta'}$ with $x^{\eta'} <_{Lex} x^\eta$ by
Lemma \ref{lem: minorelex}. This
allows us to conclude $P_s=\langle  \mathcal G^{(s)} \rangle +  \langle \cN  (J)_s \rangle$.

\medskip

Finally, in order to prove
 that $\xrightarrow{\mathcal G}$ is noetherian it is sufficient to observe that every step of reduction substitutes a term
of $J$ of the type $x^{\alpha} \ast_M x^{\eta}$ with $x^{\alpha}x^{\eta}- f_\alpha x^\eta$.
Indeed, by remark \ref{remark:minore}, each $\tau \in \supp(x^{\alpha} x^{\eta} -  f_\alpha x^\eta) \setminus \cN  ((M))$ has the form  $x^{\alpha'} \ast_M x^{\eta'}$, $x^{\eta'} <_{Lex} x^{\eta}$ and this permits to conclude by induction.
\end{proof}

As a straightforward consequence of the previous result, we obtain the following
\begin{corollary}\label{cor:equiv} If $M$ is a stably complete system and $\mathcal G$ is a $M$-marked set, the following are equivalent:
\begin{itemize}
\item $\mathcal G$ is a $M$-marked basis
\item for every $s$:  $\langle  \mathcal G^{(s)} \rangle  =(\mathcal G)_s$ 
\item for every $h\in (\mathcal G)$: $h \xrightarrow{\mathcal G}_\ast 0$
\item if $h-g\in (\mathcal G)$ and  $\supp(g)\subset  \cN(J)$, then 
$h \xrightarrow{\mathcal G}_\ast g$.
\end{itemize}    
\end{corollary}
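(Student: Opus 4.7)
The plan is to derive all four equivalences as formal consequences of Theorem \ref{RiduSC}, which already supplies the direct sum decomposition $P_s = \langle \mathcal G^{(s)}\rangle \oplus \langle \cN(J)_s\rangle$ in every degree $s$ and identifies the reduction $h \xrightarrow{\mathcal G}_\ast g$ with the assertion that $g$ is the $\langle \cN(J)_s\rangle$-component of $h$ in this decomposition. Once this is the common source, the work collapses to linear-algebra manipulations; there is no serious obstacle, only a handful of directions to check, and the order in which they are proved matters.

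First I would handle (a) $\Leftrightarrow$ (b). The inclusion $\langle \mathcal G^{(s)}\rangle \subseteq (\mathcal G)_s$ is immediate from the definition of $\mathcal G^{(s)}$. For the forward direction, if $\mathcal G$ is an $M$-marked basis then $P_s = (\mathcal G)_s \oplus \langle \cN(J)_s \rangle$; given any $h \in (\mathcal G)_s$, its decomposition via Theorem \ref{RiduSC} as $h = f + g$ with $f \in \langle \mathcal G^{(s)}\rangle \subseteq (\mathcal G)_s$ and $g \in \langle \cN(J)_s\rangle$ forces $g = h - f \in (\mathcal G)_s \cap \langle \cN(J)_s\rangle = 0$, so $h = f \in \langle \mathcal G^{(s)}\rangle$. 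Conversely, if $(\mathcal G)_s = \langle \mathcal G^{(s)}\rangle$ in every degree, then Theorem \ref{RiduSC} directly yields $P = (\mathcal G) \oplus \langle \cN(J) \rangle$.

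For (b) $\Leftrightarrow$ (c), assume (b) and take $h \in (\mathcal G)$: each homogeneous component lies in some $\langle \mathcal G^{(s)}\rangle$, and Theorem \ref{RiduSC} tells us that any element of $\langle \mathcal G^{(s)}\rangle$ reduces to its $\langle \cN(J)_s\rangle$-projection, which is $0$; hence $h \xrightarrow{\mathcal G}_\ast 0$. For the reverse, if $h \in (\mathcal G)_s$ satisfies $h \xrightarrow{\mathcal G}_\ast 0$, then Theorem \ref{RiduSC} gives $h = h - 0 \in \langle \mathcal G^{(s)}\rangle$, proving equality with $(\mathcal G)_s$.

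Finally, (d) $\Rightarrow$ (c) is the instance $g = 0$, while (c) $\Rightarrow$ (d) is the only step where some care is needed: given $h$ with $h - g \in (\mathcal G)$ and $\supp(g) \subset \cN(J)$, I would decompose $h$ into homogeneous parts and invoke Theorem \ref{RiduSC} to obtain a reduction $h \xrightarrow{\mathcal G}_\ast g'$ with $h - g' \in (\mathcal G)$ and $\supp(g') \subset \cN(J)$; subtracting, $g - g' \in (\mathcal G) \cap \langle \cN(J)\rangle$, which vanishes because (c) implies (b) implies (a), and (a) in turn trivializes this intersection. The mildly delicate point is therefore to sequence the equivalences so that, when proving (c) $\Rightarrow$ (d), the direct sum property of (a) is already in hand.
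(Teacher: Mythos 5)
Your proof is correct and follows exactly the route the paper intends: the paper gives no explicit argument, stating only that the corollary is ``a straightforward consequence'' of Theorem \ref{RiduSC}, and your write-up supplies precisely the degree-by-degree bookkeeping (the decomposition $P_s=\langle\mathcal G^{(s)}\rangle\oplus\langle\cN(J)_s\rangle$ and the identification of $\xrightarrow{\mathcal G}_\ast$ with projection onto $\langle\cN(J)_s\rangle$) that this phrase leaves implicit. Your care in sequencing the implications so that the vanishing of $(\mathcal G)\cap\langle\cN(J)\rangle$ is available when proving (c) $\Rightarrow$ (d) is exactly the right point to be careful about.
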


\begin{remark}
We point out that if $\mathcal G$ is a $M$-marked set, but not a $M$-marked basis, then there are polynomials in the ideal $(\mathcal G)$ whose support is contained in $\cN ((M))$. Hence, we do not have a "normal form" of a   polynomial $h$ modulo $(\mathcal G)$, since, in general, there are several polynomials $g'$ such
 that $\supp(g')\subset  \cN(J)$
and $h-g'\in (\mathcal G )$. 
 However, the
reduction process $h \xrightarrow{\mathcal G}_\ast g$
with respect to a ${\mathcal F}  (J)$-marked set $\mathcal G$ gives a
unique reduced polynomial $g$ for every polynomial $h$.
\end{remark}

 Using the reduction process, we can now answer Problem \ref{Classif} and characterize
the ideals $I$ that belong to the marked family $\Mf (J)$. 

\begin{theorem}\label{ Criterio}
Let $\mathcal{G}$  be a $\mathcal{F}(J)$-marked set. Then:
\[  (\mathcal{G}) \in \Mf (J) \Longleftrightarrow \forall   f_{\beta}\in {\mathcal G}, \ \forall x_i >\min (x^\beta): \ f_{\beta}x_i \xrightarrow{\mathcal G}_\ast 0
\] 
\end{theorem}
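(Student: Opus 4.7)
The forward implication is immediate from Corollary~\ref{cor:equiv}: if $(\mathcal{G})\in\Mf(J)$ then every element of $(\mathcal{G})$, including each $f_{\beta}x_{i}$, reduces to $0$. The bulk of the work lies in the converse. The plan is to apply Corollary~\ref{cor:equiv} once more and reduce the statement $(\mathcal{G})\in\Mf(J)$ to the equality $(\mathcal{G})_{s}=\langle\mathcal{G}^{(s)}\rangle$ in every degree $s$. The inclusion $\supseteq$ is tautological, and since $(\mathcal{G})_{s}$ is spanned as an $A$-module by the products $f_{\beta}x^{\eta}$ with $f_{\beta}\in\mathcal{G}$ and $|\beta|+|\eta|=s$, I need only check that every such $f_{\beta}x^{\eta}$ lies in $\langle\mathcal{G}^{(s)}\rangle$.

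I plan to argue by Noetherian induction on the multiplier $x^{\eta}$ with respect to $<_{Lex}$, which is a well-ordering on the finite set of terms of bounded degree. When every variable occurring in $x^{\eta}$ is multiplicative for $x^{\beta}$ with respect to $\mathcal{F}(J)$, then $x^{\beta}x^{\eta}=x^{\beta}\ast_{\mathcal{F}(J)}x^{\eta}$ and $f_{\beta}x^{\eta}\in\mathcal{G}^{(s)}$ by definition. Otherwise I pick a non-multiplicative variable $x_{i}>\min(x^{\beta})$ dividing $x^{\eta}$, write $x^{\eta}=x_{i}\,x^{\eta'}$, and invoke the hypothesis $f_{\beta}x_{i}\xrightarrow{\mathcal{G}}_{\ast}0$, which expresses
\[
 f_{\beta}x_{i}\;=\;\sum_{j}c_{j}\,f_{\alpha_{j}}x^{\epsilon_{j}},\qquad f_{\alpha_{j}}x^{\epsilon_{j}}\in\mathcal{G}^{(|\beta|+1)}.
\]
Multiplying by $x^{\eta'}$ yields $f_{\beta}x^{\eta}=\sum_{j}c_{j}\,f_{\alpha_{j}}x^{\epsilon_{j}+\eta'}$, and the induction will close provided every multiplier satisfies $x^{\epsilon_{j}}<_{Lex}x_{i}$: since $<_{Lex}$ is a term ordering this gives $x^{\epsilon_{j}+\eta'}<_{Lex}x_{i}x^{\eta'}=x^{\eta}$, and the inductive hypothesis then applies to each summand of the expansion.

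The hard part will be proving this multiplier bound for the whole polynomial $f_{\beta}x_{i}$, not merely for its head term. My plan is to examine the reduction term by term. For the head term $x^{\beta}x_{i}$, since $x_{i}$ is non-multiplicative for $x^{\beta}$ one has $x^{\beta}x_{i}\neq x^{\beta}\ast_{\mathcal{F}(J)}x_{i}$, so Lemma~\ref{lemma: pro ss}(2) (with its exceptional case excluded) bounds every multiplier used in reducing $x^\beta x_i$ by $<_{Lex}x_{i}$. For each tail term $c_{\delta}x^{\delta}x_{i}$ of $f_{\beta}x_{i}$ with $x^{\delta}\in\mathcal{N}(J)$ and $x^{\delta}x_{i}\in J$, the factorization $x^{\delta}\cdot x_{i}$ with $x^{\delta}\notin J$ puts Lemma~\ref{lem: minorelex} in force and yields $x^{\eta_{0}}<_{Lex}x_{i}$ for the first multiplier used in reducing $x^\delta x_i$; Remark~\ref{remark:minore} then propagates the bound to every subsequent reduction step. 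It is precisely this treatment of the tail terms---a priori they could fall in $J$ in ways that introduce multipliers not Lex-comparable to $x_{i}$---that requires the stable completeness of $\mathcal{F}(J)$ through Lemma~\ref{lem: minorelex}, and that I expect to be the main obstacle in the proof.
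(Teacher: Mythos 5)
Your proposal is correct and follows essentially the same route as the paper's proof: the forward direction via Corollary~\ref{cor:equiv}, and the converse by showing $(\mathcal{G})_m=\langle\mathcal{G}^{(m)}\rangle$ through induction on the multiplier $x^{\eta}$ with respect to $<_{Lex}$, expanding $f_{\beta}x_i$ in $\mathcal{G}^{(|\beta|+1)}$ with multipliers $<_{Lex}x_i$ and multiplying through by $x^{\eta'}$. Your explicit term-by-term justification of the multiplier bound (separating the head term, handled by Lemma~\ref{lemma: pro ss}(2), from the tail terms, handled by Lemma~\ref{lem: minorelex} and Remark~\ref{remark:minore}) is a careful unpacking of what the paper compresses into a single citation of Lemma~\ref{lemma: pro ss}, and is the right way to close that step.
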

\begin{proof}
Since "$\Rightarrow$" is a straightforward consequence of Corollary \ref{cor:equiv},  we only prove "$\Leftarrow$". More precisely, 
we prove that $(\mathcal{G})_m=(\mathcal{G}^{(m)})$, showing that if $f_{\beta}  \in \mathcal{G}$ and   $\deg(x^{\beta+\epsilon})=m$, then $f_{\beta} x^\epsilon  $ is
either an element of $\mathcal{G}^{(m)}$ itself or a linear combination of polynomials in $\mathcal{G}^{(m)}$.
\\
If this were not true, we can choose an element   $ f_{\beta}x^\epsilon \notin
\langle \mathcal{G}^{(m)}\rangle $ with $x^\epsilon  $ minimal with respect to
$<_{Lex}$.
As  $f_{\beta}x^\epsilon   \notin  \mathcal{G}^{(m)} $, at
least one variable $x_i$  appearing in $x^\epsilon $ with nonzero exponent is
non-multiplicative for $x^\beta$.
Let    $x^\epsilon   =x_i x^{\epsilon'}$.  By hypothesis
$f_{\beta}x_i \xrightarrow{\mathcal G}_\ast 0$, so that  $f_{\beta}x_i$  is a
linear combination $\sum c_i  f_{\alpha_i}x^{\eta_i}$ of polynomials
in $\mathcal{G}^{(\vert \beta \vert +1)}$. By Lemma \ref{lemma: pro ss} we have
$x^{\eta_i}<_{Lex} x_i$.
\\
Now $f_{\beta}x^\epsilon= (  f_{\beta}x_i)x^{\epsilon'}=(\sum c_i
f_{\alpha_i}x^{\eta_i})x^{\epsilon '}=\sum c_i  f_{\alpha_i}x^{\eta_i+\epsilon
'}$,
where $x^{\eta_i+\epsilon '}<_{Lex} x_i x^{\epsilon '}=x^\epsilon $.
Now we get a contradiction, since $ f_{\alpha_i}x^{\eta_i+\epsilon'} \in \langle
\mathcal{G}^{(m)}\rangle $ by the minimality of $x^\epsilon $.
\end{proof}

\begin{example}
Let $J$ be the monomial ideal $(x^3, xy,y^3)$ in $k[x,y]$ with $x<y$. Its star set is $\mathcal F(J)=\{ x^3,xy,x y^2,y^3\}$.  Using the criterion given in Theorem \ref{ Criterio}, we can easily check  that the $\mathcal F (J)$-marked set $\mathcal G:=\{f_1:={\bf x^3 }, \ f_2:={\bf xy}-x^2-y^2,\ f_3:={\bf xy^2},\ f_4={\bf y^3} \} $ (in bold the head terms) is a  $\mathcal F (J)$-market basis:
\begin{itemize}
\item $yf_1=xf_1+x^2f_2+xf_3\xrightarrow{\mathcal G}_\ast 0$, 
\item $yf_2=f_1-xf_2-f_4\xrightarrow{\mathcal G}_\ast 0 $
\item $yf_3=xf_4 \xrightarrow{\mathcal G}_\ast 0$.\end{itemize} 
This is a simple example of a marked basis which is not a \Gr\ basis. In fact, 
 it is obvious that $\Ht(f_2)=xy$ cannot be the leading term of $f_2$ with respect to any term-ordering  and, more generally, that  $J$ cannot be the initial ideal of the ideal $(\mathcal G)$, even though $(\mathcal G) \oplus \cN(J)=k[x,y]$.

A wider family of ideals of this type are presented in \cite[Example 3.18 and Appendix]{CR}. 
\end{example}

\begin{remark}\label{Seiler}
Observe that  we can perform the first step of reduction  of the polynomial $f_{\beta}x_i$  rewriting the head $x^\beta x_i $ throughout
$ f_{\alpha} x^\eta$ with $x^\beta x_i= x^\alpha x^\eta \in \off(x^\alpha)$. In this way we obtain $f_{\beta}x_i\xrightarrow{\mathcal G}
f_{\beta}x_i- f_{\alpha}x^\eta $,
namely the \emph{$S$-polynomial} $$S(f_{\beta}, f_{\alpha}):=\frac{lcm(x^\beta,x^\alpha)}{x^\beta}f_\beta- \frac{lcm(x^\beta,x^\alpha)}{x^\alpha}f_\alpha.$$
Therefore we could reformulate the criterion given by  Theorem \ref{ Criterio} as follows:
\[( \mathcal{G})\in \Mf (J) \Longleftrightarrow \forall  f_\alpha,  f_{\beta}\in {\mathcal G}: \  S(f_\alpha , f_{\beta}) \xrightarrow{\mathcal G}_\ast 0. 
\] 
However Theorem \ref{ Criterio} shows that it is sufficient to check a special subset of the $S$-polynomials that corresponds to the   basis for
the first syzygies of the terms in $\mathcal F(J)$. If $J$ is quasi stable, this basis is the one considered in   \cite{S0}.   It is obvious that 
the maximal degree of these special $S$-polynomials cannot exceed $1+\max\{\deg(x^\alpha) \ \vert \ x^\alpha \in \mathcal F (J)\}$.  
Indeed, if $J$ is quasi stable, $reg(J)=\max\{deg(\tau),\, \tau \in \mathcal{F}(J) \}$
as proved in \cite{J1,S1,S2}. 
\end{remark}

\begin{remark}\label{VWridu}
If $J$ is a quasi stable monomial ideal and  
$\mathcal G$ is a $\mathcal F(J)$-marked set,
then 
 there are only a finite number of reduction  to perform in order to decide if a $\mathcal F(J)$-marked set  $\mathcal G$ is a basis. 
 We will use this algorithm in order to endow the marked family  $\Mf(J)$ of a structure of affine scheme
 
 If the considered monomial ideal is not quasi stable, then the (unique) stably complete generating set is \emph{infinite}. 
Actually this does not necessarily exclude we can exploit it even from a computational point of view.
\end{remark}

\section{Marked families,   schemes and functors}\label{MFam}

In this section we follow  \cite{BCLR,CR} and  show how it is possible to  associate a scheme to each marked family  $\Mf (J)$. 
 Due to the naturality of this construction, we can mimic that  of     \cite{LR}, 
and define marked families as functors.

Our results are very similar, but more general, than those of \cite{BCLR,CR,LR}; in fact in those papers the ideal $J$ is assumed to be strongly stable.
Recall that 
a monomial ideal $J$ is called \emph{strongly stable}
if for every term $\tau\in J$ and pair of variables
$x_i,\ x_j$ such that $x_i\vert \tau$ and $x_i<x_j$,
then also $
\frac{\tau x_j}{x_i} $ belongs to $J$.

Obviously, a strongly stable ideal is also stable, so that $\mathcal F (J)=\cG (J)$.   
If $J$ is strongly stable, the notions of  
$\cG  (J)$-marked sets,  $\cG  (J)$-marked bases and $\cG  (J)$-marked family introduced in the previous sections  
exactly correspond to those of  $J$-marked sets,  $J$-marked bases, $J$-marked family
 considered in   \cite{BCLR,CR}  and the reduction procedure
$\xrightarrow{\mathcal G}$ with respect to a $\cG  (J)$-marked
set ${\mathcal G}$ introduced in definition
 \ref{def: riduzione} coincides
with the one used in those papers.

Moreover, for such an ideal $J$, the   scheme structure that we will define   is the same obtained in 
\cite{BCLR,CR} and used in \cite{BLR, LR} for a local study of Hilbert schemes.
Indeed,  for every monomial ideal $J$, if $I\in \Mf(J)$, then  
 the ideals $I$ and $J$ share the same Hilbert polynomial (and also the same Hilbert function), so that they correspond to points in the same Hilbert scheme.

The scheme   we associate to $\Mf(J)$ only depends on the monomial ideal $J$, 
but the way we use in order to define it  needs a set of generators $M$  complete,  finite and such that for every $M$-marked set $\mathcal G$ the  
reduction procedure 
$\xrightarrow{\mathcal G}$ is noetherian. 

\medskip

{\bf{ Then, in the following $J$ will be a  quasi stable monomial ideal and   $M$ will be its finite  star-set $\mathcal F(J)$, 
that is its Pommaret basis  $\mathcal H(J)$. }}

\medskip

Let $\{x^\alpha_1,...,x^\alpha_s\}$ be the monomials in $M$ and consider the polynomial  ring $B:=A[C]$, where $C$ is a compact notation for the set of variables
$C_{i,\beta}$  $i=1, \dots, s$ and $x^\beta \in \cN(J)_{\vert \alpha_i\vert}$. 
We also define the $M$-marked set in $B[x_1,...,x_n]$
$$\mathcal{G}:=\{f_{\alpha_i}:=x^{\alpha_i}+\sum C_{i,\beta}x^{\beta} \ \vert \, x^{\beta} \in \cN(J)_{{\vert \alpha_i\vert}} , \Ht (f_{\alpha_i})=x^{\alpha_i}\}. $$
Clearly, every $M$-marked set can be obtained specializing  $\mathcal G$, namely as $\phi (\mathcal G)$ for  a suitable morphism of $A$-algebras 
$\phi : A[C] \rightarrow A$. Moreover, by the uniqueness of the $M$-marked basis generating 
 each ideal in $\Mf(J)$, we can assert that for every ideal $I\in \Mf(J)$ there exists a unique specialization $\phi$ such that $(\phi(\mathcal G))=I$.
\\

We use Theorem \ref{ Criterio} in order to construct a set of polynomials $\mathcal R$ that will define the scheme we associate to $M$. 
If $g$ is a polynomial in $B[x_1,...,x_n]$, we  denote with  $\mathrm{coeff}_x(g)$ the set of coefficients of $g$ with respect to the only set of  variables 
$x_1, \dots ,x_n$; 
hence $\mathrm{coeff}_x(g)\subset B=A[C]$ is a set of polynomials in the variables $C$.
For every $x^{\alpha_i} \in M$ and  $x_j>\min (x^{\alpha_i})$, let $g_{\alpha_i,j}\in B[x_1, \dots, x_n]$ be such that
$f_{\alpha_i} x_j\xrightarrow{\mathcal G}_* g_{\alpha_i,j}$.

\begin{definition}\label{DefSchema}
Let $M$ be a stably complete system in $\mathcal{T}$, $A$ be any ring, and $\mathcal R$ be the union of $\mathrm{coeff}_x(g_{\alpha_i,j})$ for every $x^{\alpha_i} \in M$ and   $x_j>\min(x^{\alpha_i})$.

 We will call $M$-\emph{marked scheme} over the ring $A$, and  denote  with $\MFScheme{M}(A)$ the affine scheme $\mathrm{Spec} (A[C]/(\mathcal{R}))$.
\end{definition}

\begin{remark}
Every $M$-marked set in $A[x_1, \dots, x_n]$ is a $M$-marked basis if and only if  the coefficients of the terms in the tails satisfy the conditions given by $\mathcal R$. 

In particular, if $A=k$ is an algebraically closed field, then the closed points of $\MFScheme{M}(A)$   correspond to the  ideals in    the marked family $\Mf(J)$ where $J$ is the ideal in $k[x_1, \dots, x_n]$ generated by $M$. 
\end{remark}

\begin{remark}\label{Funtoriamo}
The above construction of  $\mathcal{R}$ is in fact \emph{independent} from  the fixed commutative  ring $A$, in the sense that it is preserved by extension of scalars.
We can first  choose $\ZZ$ as the coefficient ring and then  apply the   standard map $\ZZ \rightarrow A$.

  More formally,   for every stably complete set of terms $M$ we can define a functor
 between the category 
of $\ZZ$-algebras to the category of sets
\[ 
\MFFunctor{M}: \underline{\ZZ\text{-Alg}} \rightarrow \Sets
\]
that associates to any $\ZZ$-algebra $A$ the set 
$
\MFFunctor{M}(A):=\Mf(MA[x_1, \dots, x_n]) $
and to any morphism $\phi: A \rightarrow B$ the map
\[
\begin{split}
\MFFunctor{J}(\phi):\ \MFFunctor{M}(A)\ &\longrightarrow\ \MFFunctor{M}(B)\\
\parbox{1.5cm}{\centering I}\ & \longmapsto\ I \otimes_A B.
\end{split}
\]

Moreover, again following  \cite{LR},  it is possible to prove  that $\MFFunctor{M}$ is a representable functor represented by the scheme $\MFScheme{M}(\ZZ)= \mathrm{Spec}(\ZZ[C]/(\mathcal{R}))$.
\end{remark}

\section{Historical notes.}\label{Hist}
Through the trivial interepretation of  derivatives
$$\frac{1}{\alpha_1!\cdots \alpha_n!}
\frac{\partial^{\alpha_1+\alpha_2+\ldots+\alpha_n}}{\partial x_1^{\alpha_1}\partial
x_2^{\alpha_2}\ldots \partial x_n^{\alpha_n}},$$ in terms of  the corresponding term
$\tau = x_1^{\alpha_1}x_2^{\alpha_2}\ldots x_n^{\alpha_n}\in \mathcal{T}$,
Riquier \cite{Riq1,Riq2,Riq3} was able to
algebraically transform the problem of solving differential partial equations in terms of ideal membership.

After introducing the concept (but not the notion) of S-polynomials he proved that
 if the normal form (in terms of Gauss-Buchberger reduction) of each S-polynomial among the elements of the basis $\mathcal{G}$ goes to zero then
\begin{itemize}
\item the given basis $\mathcal{G}$ generates the related ideal;
\item the generic solution of the PDE can be given (and computed) as series in terms of initial conditions which can be described and formulated in terms of
 a Hironaka-Galligo-like decomposition \cite{Hir,Gal} (but more general) of the related {\em escalier} ${\sf N}$;
\end{itemize}
 if not all normal forms are 0, then, exactly as in Buchberger Algorithm, the non-zero normal forms are included in the basis and the procedure is repeated.

For instance, the system \cite[pp.188-9]{Riq3}
$$\frac{\partial^3 u}{\partial y^3} = A(x,y,z),\quad
\frac{\partial^2 u}{ \partial x\partial z} = B(x,y,z),\quad
\frac{\partial^3 u}{\partial x^2\partial y} = C(x,y,z),$$
must satisfy the integrability conditions
$$\frac{\partial^2 A}{ \partial x\partial z} = \frac{\partial^3 B}{\partial y^3},\quad
\frac{\partial^2 A}{\partial x^2} = \frac{\partial^2 C}{\partial y^2},\quad
\frac{\partial^2 B}{ \partial x\partial y} = \frac{\partial C}{\partial z};$$
in which case the initial conditions have the shape
$$\left\{\begin{array}{rcl}
\left.\begin{array}{rcl}
u &=& \phi_0(z)\cr
\frac{\partial u}{ \partial y} &=& \phi_1(z)\cr
\frac{\partial^2 u}{\partial y^2} &=& \phi_2(z)\cr
\end{array}\right.&
\left.\begin{array}{c}\cr\cr\cr\end{array}\right\}&
x-x_0=y-y_0=0,\cr
&&\cr
\left.\begin{array}{rcl}
\frac{\partial u}{\partial x} &=& \alpha_0\cr
\frac{\partial^2 u}{ \partial x\partial y} &=& \alpha_1\cr
\frac{\partial^3 u}{\partial x\partial y^2} &=& \alpha_2\cr
\end{array}\right.&
\left.\begin{array}{c}\cr\cr\cr\end{array}\right\}&
x-x_0=y-y_0=z-z_0=0,\cr
&&\cr
\left.\begin{array}{rcl}
\frac{\partial^2 u}{\partial x^2} &=& \psi(x)\cr
\end{array}\right.&
\left.\begin{array}{c}\cr\end{array}\right.&
y-y_0=z-z_0=0.\cr
\end{array}\right.$$

In his theory, Riquier was assuming that the set $\mathcal{ T}$ of the terms was ordered by a term-ordering; he was mainly using \cite[p.67]{Riq3}
the deglex ordering    induced by $x_1 > x_{2}
> \cdots > x_n,$ but he gave a large class of term-orderings to which his theory was applicable; actually (but he never stated that) his characterization is the classical  one of all term-orderings \cite{Erd,Rob}.
He was however forced to restrict himself to degree-compatible term-orderings in order to be granted convergency.

In his gaussian reduction, Riquier, as Buchberger, considered as head term of each "marked" polynomial its maximal term.

In his considerations on generic initial ideal, Delassus \cite{Del1}, followed by Robinson \cite{Rob1}  used  (deg)-rev-lex   induced by
$x_1 < x_{2}  \cdots < x_n$ and the minimal term as head term of each ``marked'' polynomial.

In order to "harmonize" the two notations, Janet in \cite{J1, J4}   applied
deglex  induced by $x_1 < x_{2} < \cdots < x_n$ and
chose the maximal term as head term, but
 expressed all terms as (!)  $x_n^{\alpha_n} x_{n-1}^{\alpha_{n-1}}\ldots x_1^{\alpha_1}$, while in \cite{J2} went back to use deglex  induced by $x_1 > x_{2}
> \cdots > x_n.$

What is worst, in \cite{J3} Janet not only applied deglex  induced by $x_1 < x_{2} , \cdots < x_n$ but presented all results within his notation; so, in his presentation of Delassus's result, the head term is again, {\em \`a la} Buchberher, the maximal one.

This is not helpful, as regards his reformulation of the previous results on  generic initial ideals and stability; thus while, for Robinson \cite{Rob1,Rob2} and Gunther \cite{Gun1,Gun2} a  generic initial ideal $ \epsilon(I)$ satisfies
$$\mu\in \epsilon(I) ,\, x_h \mid \mu,  i< h \then x_i\frac{\mu}{x_h}\in \epsilon( I),$$
according  \cite{J3} the formula is
$$\mu\in \epsilon( I) , x_h \mid \mu,  i> h \then x_i\frac{\mu}{x_h}\in \epsilon( I).$$

Under the suggestion of Hadamard \cite{Pom}, Janet dedicated his doctorial thesis   \cite{J1}
to  a reformulation of Riquier's results in terms of   Hilbert's results \cite{Hil}.

In particular, given a finite set of monomials $M$, he associates to each term $\tau\in M$, as functions of its relation with the other elements of $M$, a set of  variables which he labels {\em multiplicative}
(Definition~\ref{multiplicative}) and a subset of terms in $(M)$ which he called his {\em class} and which we labelled as its  {\em offspring} and considered $M$ {\em complete}
(Definition~\ref{Complete}) when the disjoint offsprings of $M$ cover $(M)$.

He then gave  \cite[p.80]{J1}  a {\em proc\'ed\'e r\'egulier pour
obtenir un syst\`eme complet base d'un module donn\'e} which {\em
ne pourra se prolonger ind\'efiniment}; it simply consisted to enlarge $M$ with the elements $xt\notin\cup_{\tau\in M} \off_M (\tau),t\in M$, $x$ non-multiplicative for $t$.

Janet can now formulate  \cite[p.75]{J4} Riquier's procedure; we can assume to have a finite basis $\mathcal{G}\subset \mathcal{P}$; denoting $M=\{\cT(f) :f\in \mathcal{G}\}$,
\begin{itemize}
\item we enlarge $M$ in order to made it complete and at the same time
\item we similarly enlarge $\mathcal{G}$, adding $xg$ to $\mathcal{G}$ when we add $x\cT(g)\notin\cup_{\tau\in M} \off_M (\tau)$;
\item we then perform Riquier's test, which, for a complete systems, consists in computing the normal form of each element $xg, g\in \mathcal{G}$, $x$  non-multiplicative for $\cT(g)$.
\end{itemize}

Janet  \cite[p.112-3]{J1} further remarks (in connection with Hilbert's syzygy theory) that the reduction-to-zero of all such elements give a basis $S$ of the syzygy module of $\mathcal{G}$. Actually he repeatedly applied the same procedure to $S$, thus computing a   resolution of $\mathcal{G}$
and anticipating Schreyer's Algorithm \cite{Sch}.

Next, in 1924, Janet \cite{J2}
moved his interest in extending the study to the homogeneous case,
 adapting his approach
on one side to the solution of partial differential equation given by
E. Cartan \cite{Car1,Car2,Car3} via his characters and test
and on the other side to the introduction by Delassus \cite{Del1} of the concept of generic initial ideal
and the precise description of it given by  Robinson \cite{Rob1,Rob2} and Gunther \cite{Gun1,Gun2};
he thus discussed the notion of {\em syst\`eme de forms (de m\^eme ordere) en involution}. The notion, as he explains, is independent from the variable chosen and allows to assign to the system a series of values $\sigma_i^{(p)}, 1\leq i \leq n, p\in{\Bbb N}$ which \cite[p.87]{J4}
{\em sont \'evidem\-ment invariables lors\-q'on fait un
changement lin\'eaire et homog\`ene des variables ind\'ependantes}
 which, under the assumption of generality,
 allow to describe the structure of  the  {\em generic escalier} of the considered ideal.
\\
The procedure, given a finite set $\mathcal{G}$ of forms, repeatedly produces  {\em \`a
la} Macaulay a linear basis
$\mathcal{B}_p$ of $(\mathcal{G})_p$ by performing linear algebra on the set $\{x_ig : g\in \mathcal{B}_{p-1},
1\leq i\leq n\}$; termination is granted when the formula (\ref{c55Eq1}) below
is satisfied.

Given a homogeneous ideal $I\subset k[x_1,x_2,\ldots,x_n]$, where  the variables are assumed to be generic, so that $ \cN  (I)$ is stable, Janet
 defined \cite[pp.30-2]{J2},\cite[p.30]{J3},\cite[pp.90-1]{J4},\cite[p.93, p.99]{Pom}
multiplicative variables  according \ref{Molt2},
 introduced values $\sigma_i^{(p)}(I)$  (or $\sigma_i^{(p)}$ for short when no confusion is possible) for every   $1\leq i \leq n$, and $ p\in{\Bbb N}$, 
which can be described as
$$\sigma^{(p)}_i := \#\left\{\tau\in\cN  (I),\deg(\tau)=p, \min(\tau)=i\right\}
$$
and,  fixing a value $p$ and denoting
$\sigma_i := \sigma^{(p)}_i$, and $\sigma'_i := \sigma^{(p+1)}_i$
proved
\begin{proposition}[Janet]\label{c55P3} It holds,
\begin{enumerate}
\item $\sigma'_1 + \sigma'_2 + \ldots + \sigma'_n \leq
\sigma_1 + 2 \sigma_2 + \ldots + n \sigma_n$;
\item $\sum_{i=1}^n {\sigma'}_i = \sum_{i=1}^n i \sigma_i
\then {\sigma'}_j = \sum_{i=j}^n  \sigma_i$ for each $j$.
\item $\sum_{i=1}^n {\sigma'}_i =  \sum_{i=1}^n i \sigma_i
\then \sum_{i=1}^n {\sigma}^{(P+1)}_i =  \sum_{i=1}^n i \sigma^{(P)}_i$
for each $P > p$.
\end{enumerate}
\end{proposition}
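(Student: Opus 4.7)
The plan is to prove the three parts in order: (1) by an explicit injection, (2) as an immediate combinatorial consequence, and (3) by induction on $P$, where the real work lies.

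For (1), fix $i$ and consider the map $\phi_i \colon \{\tau \in \cN(I)_{p+1} : \min(\tau) = i\} \to \bigsqcup_{k \geq i} \{\sigma \in \cN(I)_p : \min(\sigma) = k\}$ defined by $\phi_i(\tau) = \tau/x_i$. This is well defined because $\min(\tau)=i$ forces $x_i\mid\tau$, because $\cN(I)$ is an order ideal so $\tau/x_i \in \cN(I)_p$, and because removing $x_i$ cannot introduce a smaller-indexed variable, so $\min(\tau/x_i)\geq i$. Injectivity is immediate. Hence $\sigma'_i \leq \sum_{k\geq i}\sigma_k$, and summing over $i$ gives $\sum_i\sigma'_i \leq \sum_k k\sigma_k$, which is (1).

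For (2), equality in the summed bound forces equality in every individual bound $\sigma'_i \leq \sum_{k\geq i}\sigma_k$, i.e., each $\phi_i$ is bijective; this gives $\sigma'_j = \sum_{k\geq j}\sigma_k$. The geometric content to feed into (3) is: equality at level $p$ is equivalent to saying that for every $\sigma\in\cN(I)_p$ and every $x_i$ with $i\leq\min(\sigma)$, the product $x_i\sigma$ still lies in $\cN(I)$.

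For (3), I induct on $P$, so it suffices to prove that equality at level $p$ implies equality at level $p+1$. Assume the contrary: some $\tau' \in \cN(I)_{p+1}$ and $i \leq m := \min(\tau')$ satisfy $x_i\tau' \in I$. Set $\tau := \tau'/x_m \in \cN(I)_p$ so that $\min(\tau)\geq m \geq i$, and let $\mu \in \cG(I)$ divide $x_i\tau' = x_i x_m\tau$. Since $\tau,\tau'\in\cN(I)$, neither $\mu\mid\tau$ nor $\mu\mid\tau'$ is possible, so $\mu$ must genuinely use the extra $x_i$; together with $\min(\tau')\geq i$ (which forces every variable in $\mu/x_i$ to have index $\geq i$), this gives $\min(\mu)=i$. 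The stability of $I$, which holds because we are in generic coordinates (so $\cN(I)$ is stable), then yields $x_k\mu/x_i \in I$ for every $x_k>x_i$; a careful choice of such $x_k$ among the variables of $x_m\tau$ extracts from this an element of $I$ of the form $x_i\rho$ with $\rho \in \cN(I)_p$ and $i\leq\min(\rho)$, contradicting the equality hypothesis at level $p$.

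The main obstacle is the case analysis I glossed over in (3): a priori the generator $\mu$ could legitimately need \emph{both} the extra $x_i$ and the extra $x_m$, so that $\mu/x_i$ still does not divide $\tau$. Disposing of this sub-case requires a second application of stability to push $x_i$ through $x_m$, reducing back to the clean situation $\mu=x_i\mu'$ with $\mu'\mid\tau$. Verifying that this reduction terminates and actually produces an obstruction at level $p$ (rather than merely regenerating one at level $p+1$) is the delicate combinatorial step, and it is precisely the content of Janet's ``involution propagates'' principle, which is what makes the sequence $\sigma_i^{(P)}$ for $P>p$ computable from the single level $p$.
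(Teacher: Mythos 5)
The paper offers no proof of Proposition~\ref{c55P3} to compare against: it is stated in the historical section as a result of Janet, with pointers to \cite{J2,J3,J4,Pom}. Your parts (1) and (2) are correct and self-contained: the injections $\phi_i$ give $\sigma'_i\leq\sum_{k\geq i}\sigma_k$, summing gives (1), equality of the sums forces equality in each bound, which is (2), and your reformulation of ``equality at level $p$'' as ``$x_i\sigma\in\cN(I)$ for every $\sigma\in\cN(I)_p$ and every $x_i$ with $i\leq\min(\sigma)$'' is exactly right (equivalently, by Theorem~\ref{moltiplicative}, it says $\mathcal F(J)_{p+1}=\emptyset$ for the monomial ideal $J$ with escalier $\cN(I)$).

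Part (3), however, has a genuine gap, and it sits precisely where you flagged it. The step ``a careful choice of such $x_k$ among the variables of $x_m\tau$'' can be vacuous: if $\tau'$ is a pure power of $x_i$, then $i=m$, the only variable dividing $x_m\tau=\tau'$ is $x_i$ itself, and there is no $x_k>x_i$ to choose. This is not a defect of exposition that more care would repair: with the definitions as the paper gives them ($\sigma_i^{(p)}$ counted on the stable escalier $\cN(I)$), statement (3) is false. Take $J=(x_2,x_1^3)\subset k[x_1,x_2]$, $x_1<x_2$, which is strongly stable and hence its own generic initial ideal; then $\cN(J)=\{1,x_1,x_1^2\}$, so $\sigma_1^{(1)}=\sigma_1^{(2)}=1$, $\sigma_2^{(1)}=\sigma_2^{(2)}=0$, $\sigma_i^{(3)}=0$: equality $\sum_i\sigma_i^{(2)}=\sum_i i\,\sigma_i^{(1)}=1$ holds at $p=1$, yet $\sum_i\sigma_i^{(3)}=0\neq 1=\sum_i i\,\sigma_i^{(2)}$. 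The point is that equality at level $p$ only says that $J$ has no minimal generator (more precisely, no element of $\mathcal F(J)$) in degree $p+1$, and a stable ideal may perfectly well skip a degree in its generators. Part (3) becomes true only under the additional hypothesis, implicit in Janet's procedure and explicit in Definition~\ref{involutive}, that the ideal $I$ is generated in degrees $\leq p$; in that form it is Green's crystallization principle for the revlex generic initial ideal, and its proof must use the interplay between $I$ and $\cN(I)$ rather than only the combinatorics of the stable monomial complement, which is all your argument (and, indeed, the bare statement in the paper) has available.
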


He can then state
\begin{definition}[Janet]\cite[pp.90-1]{J4}\label{involutive}
A finite set $E\subset \mathcal{ P}$ of
forms of degree at most $p$
generating the ideal $I\subset P$,
is
said to be {\em involutive}\footnote{\em en involution.}\index{involutive} if, with the present notation,
it satisfes the formula
\begin{eqnarray}\label{c55Eq1}
\sum_{i=1}^n \sigma^{(p+1)}_i = \sum_{i=1}^n i \sigma^{(p)}_i.
\end{eqnarray}
\qed\end{definition}

Thus, once the iterated Macaualy-like procedure satisfies (\ref{c55Eq1}) at degree $\bar{p}$ then it successfuly terminates and  the {\em finite} bases produced by it is involutive; Janet  is therefore able to present the ideal
$\{\tau\in{\sf T}(I), \deg(\tau)\geq \bar{p}\}$ by explicitly producing\cite{J4} the decomposition
$$\{\tau\in{\sf T}(I), \deg(\tau)\geq \bar{p}\} = \sqcup_{\tau\in M} \off_M(\tau)$$
where $M$ is the stably complete set $M=\{\tau\in{\sf T}(I), \deg(\tau)\geq \bar{p}\}$
and to express its Hilbert polynomial as
$${^h}H_I(t) =
\sum_{h=1}^{n-1} \binom{t - p + h - 1}{h-1}
\sigma^{(p)}_h(I).$$
In our context, the characterization of $\sigma_i^{(p)}$ and definition \ref{involutive} lead to the following
\begin{proposition}\label{JQstabBase}
With the previous notation,
if $J$ is a quasi stable monomial
ideal, then
$$\sum_{i=1}^n
 \sigma_i^{(p+1)}(J)=
\sum_{i=1}^ni
\sigma_i^{(p)}(J).$$

The same equality  holds if $I$ is a homogeneous ideal generated by a $J$-marked basis $\mathcal G$ with $J$ quasi stable.

Therefore $\mathcal G$ is an  involutive basis.
\end{proposition}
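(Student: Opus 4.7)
My plan is to count $|\mathcal N(J)_{p+1}|$ by exhibiting, for $p$ at least $D := \max\{\deg(\tau) : \tau \in \mathcal F(J)\}$ (finite by Proposition \ref{Pocostabile}), a bijection between $\mathcal N(J)_{p+1}$ and the set of pairs $(\tau, x_j)$ with $\tau \in \mathcal N(J)_p$ and $x_j \leq \min(\tau)$. The first set has cardinality $\sum_i \sigma_i^{(p+1)}$, the second $\sum_i i \sigma_i^{(p)}$, so the asserted equality will follow. Working at $p \geq D$ is enough: once one has it at $p = D$, Proposition \ref{c55P3}(3) propagates it to all larger degrees, and $p = D$ is precisely what is needed to invoke Definition \ref{involutive} for $\mathcal G$.

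The heart of the argument will be the following lemma: for $p \geq D$, $\tau \in \mathcal N(J)_p$, and $x_j \leq \min(\tau)$, one has $x_j\tau \in \mathcal N(J)$. I would prove it by contradiction. Assuming $x_j\tau \in J$, Theorem \ref{DISG} applied to the Pommaret basis $\mathcal F(J) = \mathcal H(J)$ gives a unique star decomposition $x_j\tau = \sigma \eta$, with $\sigma \in \mathcal F(J)$ and $\max(\eta) \leq \min(\sigma)$. If $\eta \neq 1$, then $\min(x_j\tau) = \min(\eta)$, so $\min(\eta) = x_j$ and $x_j \mid \eta$; but then $\tau = \sigma (\eta/x_j) \in \off_{\mathcal F(J)}(\sigma) \subset J$, contradicting $\tau \in \mathcal N(J)$. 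Hence $\eta = 1$ and $x_j\tau = \sigma \in \mathcal F(J)$, forcing $p + 1 = \deg(\sigma) \leq D$, against $p \geq D$.

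With the lemma in hand, the bookkeeping is routine. Define $\Phi(\tau, x_j) := x_j\tau$; by the lemma $\Phi$ lands in $\mathcal N(J)_{p+1}$. Surjectivity: for $\tau' \in \mathcal N(J)_{p+1}$, set $x_j := \min(\tau')$ and $\tau := \tau'/x_j$; the order-ideal property places $\tau$ in $\mathcal N(J)_p$, and $\min(\tau) \geq x_j$ by inspection of the exponent of $x_j$ in $\tau'$. Injectivity comes from taking $\min$ of both sides of an equation $x_j\tau = x_{j'}\tau'$. Counting the domain fibrewise over $\tau$ gives $\sum_i i \sigma_i^{(p)}$, while the codomain has cardinality $\sum_i \sigma_i^{(p+1)}$.

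For the marked-basis case, Definition \ref{MMB} gives $P = (\mathcal G) \oplus \langle \mathcal N(J) \rangle$ as $A$-modules, so $\mathcal N(J)$ serves as the escalier of $I := (\mathcal G)$, forcing $\sigma_i^{(p)}(I) = \sigma_i^{(p)}(J)$; the equality transfers verbatim. Since $\mathcal G$ has maximal degree $D$, the equality established at $p = D$ is exactly the involutive condition of Definition \ref{involutive}. The main obstacle will be the lemma of paragraph two: it is the only place where quasi-stability enters in an essential way (through finiteness of $D$ and the existence of the star decomposition), and the $\min$-tracking in the case $\eta \neq 1$ is the delicate part — once one sees that a nontrivial $\eta$ immediately pulls $\tau$ itself into $J$, everything else falls into place.
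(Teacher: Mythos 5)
Your proof is correct and rests on the same idea as the paper's: the unique decomposition $\tau=\theta\,x_i$ with $x_i\le\min(\theta)$ of a term of degree $p+1$, for $p$ at least the maximal degree of $\mathcal F(J)$. The only (immaterial) difference is that you run the bijection on the escalier side, matching $\cN(J)_{p+1}$ with pairs $(\theta,x_j)$, $\theta\in\cN(J)_p$, $x_j\le\min(\theta)$, whereas the paper states the complementary fact for $J_{p+1}$ in terms of $J_p$; your key lemma is exactly the complement of the paper's one-line observation, worked out in more detail via the star decomposition.
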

\begin{proof}
For the first statement  we observe that if  $p\geq \overline{p}$ every term  $\tau \in J_{p+1}$  can be written in a unique way as  a product $\tau=\theta x_i$, with  $\theta\in  J_{p}$ and
 $x_i$ a multiplicative variable for $\theta$, i.e.  $x_i \leq \min(\theta)$.
 
 If $I$ is the homogeneous ideal generated by a $J$-marked set $\mathcal G$, then 
  for the corresponding $f_\tau \in \mathcal{G}^{(p+1)}$ we have  $f_\tau=f_\theta x_i$ with $f_\theta$ in  $\mathcal{G}^{(p)}$ and of course $x_i \leq \min(\theta)$. 
  
 If  $\mathcal G$ is a 
$J$-marked basis, then we get the equality since  $(\mathcal{G})_{t}=(\mathcal{G}^{(t)})$ for every $t$ (Corollary \ref{cor:equiv}).
\end{proof}

Note that for an ideal $I$ generated by a  $J$-marked set $\mathcal G$ which is not  a marked basis,
only  the inequality $\sum_{i=1}^n
 \sigma_i^{(p+1)}\leq
\sum_{i=1}^ni
\sigma_i^{(p)}$ holds true, since $(\mathcal{G})_{t}\supseteq (\mathcal{G}^{(t)})$.

\medskip

The iterated Macaualy-like procedure gives also a fine decomposition of
${\sf N}(I)_{ \geq \bar{p}-1}$ as follows:
\begin{itemize}
\item he partitions the set
${\sf N}(I)_{\overline{p}-1}$ as ${\sf N}_{\overline{p}-1} = \sqcup_{i=0}^{n-1} N_i$
associating to
\begin{itemize}
\item $N_0$ the monomials $\tau\in{\sf N}_{\overline{p}-1}(I)$ for which
$x_1\tau\in{\sf T}(I)$;
\item while each of the $\sigma_1$ elements
$\tau=\frac{\upsilon}{x_1}\in {\sf N}(I)_{\overline{p}-1}\setminus N_0,  \clss(\upsilon)  = 1$, is inserted in $N_i$ if it is one of the $\sigma_i$ elements which can be expressed as
$\tau=\frac{\upsilon_i}{x_i},  \clss(\upsilon_i) = i$
but is not one of the
$\sigma_{i+1}$ elements which can be expressed as
$\tau=\frac{\upsilon_{i+1}}{x_{i+1}},  \clss(\upsilon_{i+1}) = i+1$.
\end{itemize}
\item he then associate to each $\tau\in N_i$
$\mult(\tau)=\{x_j, 1\leq j\leq i\}$ as multiplicative variables and
$\off (\tau):=\{\tau \omega, \omega\in \mathcal{ T}[1,i]\}$ as its offspring
\item and states
$$\{\tau\in{\sf N}(I), \deg(\tau)\geq \bar{p}-1\} = \sqcup_{i=0}^{n-1}
\sqcup_{\tau\in N_i} \off (\tau).$$
\end{itemize}

Riquier's and Janet's results were introduced to the Computational Algebra commutative at the MEGA-90
Symposium in 1990 by a survey by Pommaret \cite{PomAk}  of his theory and, two years later, through a paper by F.
Schwarz \cite{Schw}
where he  remarked:
\begin{quote}
The concept of a Gr\"obner base and algorithmic methods for constructing it for a given system of
multivariate polynomials has been established as an extremaly important tool in commutative
algebra. It seems to be less well known that similar ideas have been applied for investigating
partial differential equations (pde's) around the turn of the century in the pioneering work of the
French mathematicians Riquier and Janet. [...] [T]heir theory [...] is basically a
critical-pair/completion procedure. All basic concepts like a term-ordering, reductions and
formation of critical pairs are already there.
\end{quote}

This prompted V. Gerdt to suggest his coworkers Zharkov and Blinkov to investigate whether the
results by Janet and Pommaret were translatable from pde's to polynomial rings in order to produce an
effective alternative approach to Buchberger's Algorithm; the conclusion of this investigation \cite{ZB,Z} was successful --- the proposed algorithm was able to give a
solution with a speed-up of 20 w.r.t. degrevlex Buchberger's algorithm on classical test-suites and caused sensation in the community.

Unfortunately, among the two constructions proposed by Janet, they hitted the involutive one, which is {\em not} a Buchberger-like procedure and presented it as such, remarking that in general does not terminate and that the basis is not necessarily finite  unless the ideal is 0-dimensional. What is worst, they attributed to Pommaret their mistakes, thus introducing in literature a ``bad'' fictional Pommaret division compared with the ``good'' Janet division (related to Janet completion \cite{J1} procedure).

An algorithm based on Janet's notion \cite{J1} of completeness is reported in \cite{BG1,BG2,G}

Involutiveness is the argument of the {\em Habilitation} thesis (2002) of Seiler \cite{SH,S0,S1}; an improved version has recently appeared as \cite{SB}. Finiteness is a required condition for the notion of Pommaret bases \cite{S2}.

\end{document}